\def\blfootnote{\xdef\@thefnmark{}\@footnotetext}
\theoremstyle{plain}
\newtheorem{theorem}{Theorem}[section]
\newtheorem{lemma}[theorem]{Lemma}
\newtheorem{proposition}[theorem]{Proposition}
\newtheorem{corollary}[theorem]{Corollary}
\theoremstyle{definition}
\newtheorem{definition}[theorem]{Definition}
\newtheorem{remark}[theorem]{Remark}
\theoremstyle{remark}
\mathchardef\emptyset="001F
\numberwithin{equation}{section}
\newcommand{\be}{\begin{equation}}
\newcommand{\ee}{\end{equation}}
\newcommand{\R}{{\mathbb R}}
\newcommand{\Rm}{{\R}^n}
\definecolor{dmagenta}{rgb}{0.8,0,0.8}
\definecolor{ddcyan}{rgb}{0,0.6,0.9}
\definecolor{dcyan}{rgb}{0,0.4,0.9}
\definecolor{ddmagenta}{rgb}{0.8,0,0.8}
\definecolor{dred}{rgb}{.8,0,0}
\definecolor{vgreen}{rgb}{0.1,0.5,0.2}
\begin{document}
\title[Gamma-convergence of quadratic functionals]{Gamma-convergence of quadratic functionals perturbed by bounded linear functionals}

\author[Gianni Dal Maso]{Gianni Dal Maso}
\address[Gianni Dal Maso]{SISSA, Via Bonomea 265, 34136 Trieste,
Italy}
\email[Gianni Dal Maso]{dalmaso@sissa.it}

\author[Davide Donati]{Davide Donati}
\address[Davide Donati]{SISSA, Via Bonomea 265, 34136 Trieste,
Italy}
\email[Davide Donati]{ddonati@sissa.it}

\thanks{Preprint SISSA 20/2022/MATE}

\begin{abstract}
Given a bounded open set $\Omega\subset \Rm$, we study sequences of quadratic functionals on the Sobolev space $H^1_0(\Omega)$, perturbed by sequences of bounded  linear functionals. We prove that their $\Gamma$-limits, in the weak topology of $H^1_0(\Omega)$, can always be written as the sum of a quadratic functional, a linear functional, and a non-positive constant. The classical theory of $G$- and $H$-convergence completely characterises the quadratic and linear parts of the $\Gamma$-limit and shows that their coefficients do not depend on $\Omega$. The constant, which instead depends on $\Omega$ and will be denoted by $-\nu(\Omega)$, plays an important role in the study of the limit behaviour of the energies of the solutions. The main result of this paper is that, passing to a subsequence, we can prove that $\nu$ coincides with a non-negative Radon measure on a sufficiently large collection of  bounded open sets $\Omega$. Moreover, we exhibit an example that shows that the previous result cannot be obtained for every bounded open set. The specific form of this example shows that the compactness theorem for the localisation method in $\Gamma$-convergence cannot be easily improved.
\end{abstract}
\medskip
\maketitle

{\bf Keywords: }
 $\Gamma$-convergence,  Localisation method, Elliptic equations, $G$-convergence
 
 \medskip
 
{\bf 2020 MSC: }  35J20, 49J45

\section{Introduction}
The aim of this paper is to complete the analysis of the asymptotic behaviour, as $k\to\infty$, of sequences of  functionals of the form 
\begin{equation*}
    F_k(u)=\frac{1}{2}\int_\Omega A_k\nabla u\cdot\nabla u \,dx-\langle f_k,u\rangle \qquad \text{for } u\in H^1_0(\Omega),
\end{equation*}

\noindent where $\Omega\subset\Rm$ is a bounded open set, $A_k$ are $n\times n$ symmetric matrices of $L^\infty(\Omega)$ functions satisfying the usual ellipticity and boundedness conditions, uniformly with respect to $k$, and  $f_k$ is a bounded sequence in the dual $H^{-1}(\Omega)$ of the Sobolev space $H^1_0(\Omega)$.

These functionals, and the asymptotic behaviour of the solutions to their Euler-Lagrange equations 
\begin{equation} \label{eq:problemadacambiare} \begin{cases}
 -\textup{div}(A_k\nabla u_k)= f_k  \,\,\, &\text{in } \Omega,\\
 u_k=0 \, &\text{on }  \partial \Omega,\\
\end{cases}
 \end{equation} 

\noindent have been extensively studied between the late 60s and the 80s. This investigation led to the notion of $G$-convergence of the matrices $A_k$ in the symmetric case (see \cite{DeGiorgi1973,Spagnolo1976,Spagnolo1968}) and of $H$-convergence in the general case (see \cite{Murat1978,Murat1997}). These tools have been widely used in homogenisation problems, where, in the periodic case, the limit matrix can be obtained by solving some auxiliary problems in the periodicity cell (see \cite{Bahvalov1974,Lions1978,Cioranescu1999,Jikov1994,Marchenko2006,Oleuinik1992,Pankov1997}). 

The classical theory of $H$-convergence completely characterises the asymptotic behaviour of the solutions to \eqref{eq:problemadacambiare}, even when the matrices $A_k$ are not symmetric.
Indeed, since $f_k$ is bounded in $H^{-1}(\Omega)$, it can be written as  $f_k=\textup{div}(h_k)+g_k$, where, up to a subsequence, $h_k\in [L^2(\Omega)]^n$ weakly converges to  $h$ in $[L^2(\Omega)]^n$ and $g_k$ weakly converges to $g$ in $L^2(\Omega)$. Therefore \eqref{eq:problemadacambiare} can be equivalently rewritten as
 \begin{equation} \label{rewritten}
\begin{cases}
 -\textup{div}(A_k\nabla u_k+h_k)= g_k  \,\,\,&\text{in } \Omega,\\
 u_k=0  &\text{on } \partial \Omega.\\
 \end{cases}
 \end{equation}

In this form, the problem was studied first in the periodic setting in \cite{Francfort83}, and then in a more general setting in \cite{Francfort91} (see also \cite{Tartar2009} for a comprehensive guide to this kind of problems), where it is shown that, up to a not relabelled subsequence, the solutions $u_k$ to problem \eqref{eq:problemadacambiare} converge weakly in $H^1_0(\Omega)$ to the $u$ solution of 
\begin{equation}  \label{eq:problemacambiato} 
\begin{cases}
 -\textup{div}(A\nabla u)= g \,\,\,  &\text{in } \Omega,\\
 u=0    &\text{on }  \partial \Omega,\\
\end{cases}
 \end{equation}
where $g\in H^{-1}(\Omega)$ is explicitly determined by means of a corrector result (see \cite[Lemma 13.3]{Tartar2009}). This formula implies that, in general, $g$ is not a limit point of the sequence $f_k$ (see also \cite[Proposition 1.2]{Francfort83}). 

As a byproduct, these results also show that the term $g$ is local, in the sense that, if $U\subset\Omega$ is an open set, then the  solutions $u^U_k$ to  problem \eqref{eq:problemadacambiare}, with $\Omega$  replaced by $U$, weakly converge in $H^1_0(U)$ to $u^U$, the unique solution to problem \eqref{eq:problemacambiato} with the same $g$ and with $\Omega$ replaced by $U$.

When the matrices $A_k$ are symmetric, the solutions $u^U_k$ and  $u^U$ can be seen as the unique minimisers of the functionals defined for every $v\in H^1_0(U)$ as
\begin{eqnarray}
    &\displaystyle F_k(v,U)=\frac{1}{2}\int_U A_k\nabla{v}\cdot\nabla{v}\,dx-\langle f_k,\tilde{v}\rangle,\label{intro-Fk}\\&\displaystyle
    F_0(v,U)=\frac{1}{2}\int_UA\nabla{v}\cdot\nabla{v}\,dx-\langle g,\tilde{v}\rangle,\nonumber
\end{eqnarray}
where $\tilde{v}$ is the extension to $\Omega$ of $v$ obtained by setting $\tilde{v}=0$ outside $U$, and $g$ is the term appearing in \eqref{eq:problemacambiato}. 

In this paper, we study the asymptotic behaviour of these functionals in the sense of $\Gamma$-convergence. By a general result on the convergence of minimum values (see \cite[Theorem 7.8]{DalMaso1993}), this is useful to study of the limit the energies of the solutions of
\eqref{eq:problemadacambiare}, with $\Omega$ replaced by $U$; indeed, these energies coincide with the values of $F_k(\cdot, U)$ on their minimisers.

Although, in general, the sequence  $F_k(\cdot,U)$ does not $\Gamma$-converge to $F_0(\cdot,U)$, our main result (see Theorem \ref{thm:maintheorem}) shows that there exist a subsequence, not relabelled, and a bounded non-negative Radon measure $\nu$ on $\Omega$ such that
\begin{equation}\label{eq:convergenceintro}
F_k(\cdot,U)\ \Gamma\textup{-converges to } F_0(\cdot,U)-\nu(U) \textup{ in the weak topology of }H^1_0(U),
\end{equation}
for every $U$ in a {\it rich\/} collection of open subsets of $\Omega$ (see Definition \ref{def:rich}).

This implies that the energies $F_k(u_k^U,U)$ converge to $F_0(u^U,U)-\nu(U)$, hence $\nu(U)$ can be interpreted as the asymptotic energy gap between $u^U$ and the sequence $u^U_k$. This measure theoretical property of the energy gap, which, to our knowledge, was never observed in the literature, completes the picture of the asymptotic behaviour of the solutions of \eqref{eq:problemadacambiare}, with $\Omega$ replaced by $U$, and of their energies.

Simple examples, even in dimension one, show that, if $f_k$ does not converge strongly to $f$ in $H^{-1}(\Omega)$, then the measure $\nu$ may be non-trivial, even if $A_k=A$ for every $k$. A very complex example of a non-zero $\nu $ is presented in Proposition 
~\ref{controesempio}, which is proved for different purposes.

Can we obtain \eqref{eq:convergenceintro} for {\it every\/} open set $U\subset\Omega$, passing possibly to a further subsequence? This raises a general question concerning the compactness result (see \cite[Theorem 16.9]{DalMaso1993}) in the localisation method for $\Gamma$-convergence (see \cite[Chapter 16]{DalMaso1993}): is it possible to find a subsequence, not relabelled, such that
\begin{equation}\label{eq:controesempiogenerale}
F_k(\cdot,U)\ \Gamma\textup{-converges in the weak topology of }H^1_0(U)
\end{equation}
for {\it every\/} open set $U\subset\Omega$? Unfortunately, this is not true, as we show at the end of the paper
 (see Corollary~\ref{controesempio2}).
 More precisely, we exhibit a sequence $F_k$ of functionals of the form \eqref{intro-Fk}, with $A_k$ equal to the identity matrix, such that for every subsequence there exists an open set $U\subset\Omega$ such that \eqref{eq:controesempiogenerale} does not hold.
 In particular, this implies that \eqref{eq:convergenceintro} cannot hold for every open set $U\subset\Omega$, showing that the technicalities involving rich collections of open sets cannot be easily simplified.

  To our knowledge, in the general framework of the localisation method this is the first example of the existence, for every subsequence, of exceptional open sets for which $\Gamma$-convergence  does not hold. This shows that \cite[Theorem 16.9]{DalMaso1993} cannot be easily improved. 

\section{Notation and preliminary results}
 For every bounded open set $U\subset\Rm$ the Sobolev space $H^1(U)$ is the space of functions in $L^2(U)$ whose first-order distributional derivatives are in $L^2(U)$. The space $H^{-1}(U)$ is the dual space of $H^1_0(U)$, the closure in $H^1(U)$ of $C^\infty_c(U)$. We will endow $H^1_0(U)$ with the norm
\begin{equation*}
    \| u \|_{H^1_0(U)}=\Big(\int_U\lvert\nabla{u(x)}\rvert^2dx\Big)^{1/2},
\end{equation*}
which, by the Poincaré Inequality, is equivalent on $H^1_0(U)$ to the usual norm of $H^1(U)$. We denote the duality pairing between $H^{-1}(U)$ and $H^1_0(U)$ with $\langle \cdot,\cdot\rangle$. If $U\subset V$ are bounded open subsets of $\Rm$, every $u\in H^1_0(U)$ can be extended to a function of $H^1_0(V)$, still denoted by $u$, by setting $u=0$ in $V\setminus U$.

Throughout the paper, $\Omega$ is a fixed open bounded subset of $\Rm$, $n\geq 1$, while $\alpha$ and  $\beta$ are two  constants satisfying $ 0<\alpha\leq\beta<+\infty$. We denote by $\mathcal{M}_\alpha^\beta(\Omega)$ the space of symmetric matrices $A\in [L^{\infty}(\Omega)]^{n\times n}$ such that
\begin{equation}\label{eq:ellipticitycond}
  \alpha \lvert\xi\rvert^2\leq A(x)\xi\cdot\xi\leq\beta |\xi|^2\quad \text{ for a.e. }x\in \Omega\,\text{ and every }\xi\in\Rm.
\end{equation}

Although our analysis is done in terms of $\Gamma$-convergence, it is useful to present also the notion of $G$-convergence,  introduced by Spagnolo  in 1968 (see \cite{Spagnolo1976,Spagnolo1968}), since we shall use many results in the literature formulated in this language.

\begin{definition}[$G$-convergence]\label{def:Hconv}
Let $A_k,\,A\in \mathcal{M}_\alpha^\beta(\Omega)$. 
The sequence $A_k$ is said to $G$-converge to $A$ in $\Omega$ if and only if for every $f\in H^{-1}(\Omega)$ the solutions $u_k$ of the problems 
   \begin{equation*} 
    \begin{cases}
    -\text{div}(A_k\nabla{u_k})=f\quad\text{ in }\Omega,\\
    u_k\in H^1_0(\Omega),
    \end{cases}
    \end{equation*}
 converge weakly in $H^1_0(\Omega)$ to  the solution $u$ of the problem
    \begin{equation*}        
\begin{cases}
    -\text{div}(A\nabla{u})=f\quad\text{ in }\Omega,\\
    u\in H^1_0(\Omega).
    \end{cases}
\end{equation*}
\end{definition}

On the space  $\mathcal{M}_\alpha^\beta(\Omega)$ the notion of $G$-convergence coincides with the more general notion of $H$-convergence introduced by Murat and Tartar for possibly non-symmetric matrices (see \cite{Murat1978}, or \cite[Proposition 13.6]{Cioranescu1999}).

It is important to recall that $\mathcal{M}_\alpha^\beta(\Omega)$ is sequentially compact with respect to  $G$-convergence.
\begin{theorem}[{\cite[Theorem 2]{Murat1997}}]\label{thm:seqcompactness}
Let $A_k\in\mathcal{M}^\beta_\alpha(\Omega)$. Then there exist an $A\in\mathcal{M}_\alpha^\beta(\Omega)$ and a subsequence $A_{h_k}$ $G$-converging to $A$ in $\Omega$.
\end{theorem}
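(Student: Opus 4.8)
This is a classical result, essentially the Spagnolo--Murat--Tartar compactness theorem; the plan is to sketch the compensated-compactness argument, which in the symmetric case is lighter than in the general $H$-convergence setting because one never has to work with the transposed matrices.

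First I would introduce, for each $k$, the solution operator $S_k\colon H^{-1}(\Omega)\to H^1_0(\Omega)$ sending $f$ to the solution $u_k$ of $-\mathrm{div}(A_k\nabla u_k)=f$. By the Lax--Milgram theorem and \eqref{eq:ellipticitycond}, the operators $S_k$ and the maps $f\mapsto A_k\nabla S_kf$ are bounded uniformly in $k$, with norms depending only on $\alpha$ and $\beta$. Fixing a countable dense subset of $H^{-1}(\Omega)$, a diagonal extraction together with the uniform bounds (an $\varepsilon/3$ argument) produces a subsequence along which $S_kf\wto S_0f$ in $H^1_0(\Omega)$ and $A_k\nabla S_kf\wto \Phi_0f$ in $[L^2(\Omega)]^n$ for \emph{every} $f\in H^{-1}(\Omega)$, where $S_0$ and $\Phi_0$ are bounded linear operators. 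Since $\mathrm{div}$ is continuous from $[L^2(\Omega)]^n$ to $H^{-1}(\Omega)$ for the weak topologies, passing to the limit in the equation gives $-\mathrm{div}(\Phi_0f)=f$ and $S_0f\in H^1_0(\Omega)$ for every $f$.

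The heart of the proof is to show that $\Phi_0$ acts on $f$ only through $\nabla S_0f$, locally and pointwise-linearly: there exists $A\in\mathcal M_\alpha^\beta(\Omega)$ with $\Phi_0f=A\,\nabla S_0f$ a.e.\ in $\Omega$ for every $f$. For this I would invoke the div--curl lemma of compensated compactness \cite{Murat1978}. Given $f,g\in H^{-1}(\Omega)$, set $u_k=S_kf$, $v_k=S_kg$; on any ball $B$ with $\overline B\subset\Omega$ the field $A_k\nabla u_k$ has divergence $-f$, fixed hence lying in a compact subset of $H^{-1}(B)$, while $\nabla v_k$ is curl-free, and symmetrically for $A_k\nabla v_k$ and $\nabla u_k$. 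Using the symmetry $A_k\nabla u_k\cdot\nabla v_k=\nabla u_k\cdot A_k\nabla v_k$ and passing to the limit in $\mathcal D'(B)$ in both orderings yields $\Phi_0f\cdot\nabla S_0g=\nabla S_0f\cdot\Phi_0g$ a.e.\ in $\Omega$. The same computation, applied when $\nabla S_0f_1=\nabla S_0f_2$ on an open set $\om$, gives $\Phi_0(f_1-f_2)\cdot\nabla S_0g=0$ on $\om$ for all $g$, hence $\Phi_0(f_1-f_2)=0$ on $\om$ once one knows the gradients $\nabla S_0g$ span $\R^n$ at a.e.\ point of $\om$; together with the linearity of $S_0$ and $\Phi_0$ this produces the matrix field $A$. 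To check $A\in\mathcal M_\alpha^\beta(\Omega)$ one again uses the div--curl lemma: for $u_k=S_kf$ one has $A_k\nabla u_k\cdot\nabla u_k\wto A\,\nabla S_0f\cdot\nabla S_0f$ in $\mathcal D'(\Omega)$; testing against a non-negative cutoff and using, on one side, $A_k\xi\cdot\xi\ge\alpha|\xi|^2$ with the weak lower semicontinuity of the $L^2$ norm, and, on the other, $A_k\xi\cdot\xi=A_k^{-1}(A_k\xi)\cdot(A_k\xi)\ge\beta^{-1}|A_k\xi|^2$, transfers \eqref{eq:ellipticitycond} to $A$ (the upper bound appearing as a lower bound for $A^{-1}$). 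Finally $u_0=S_0f\in H^1_0(\Omega)$ solves $-\mathrm{div}(A\nabla u_0)=-\mathrm{div}(\Phi_0f)=f$, so by uniqueness $u_0$ is \emph{the} solution of the limit problem; since this holds for every $f$, the subsequence $A_k$ $G$-converges to $A$ in the sense of Definition~\ref{def:Hconv}.

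The main obstacle is precisely the locality step — showing that the abstract limit flux operator $\Phi_0$ is a pointwise linear function of the limit gradient — which is where compensated compactness is essential, and which also requires some care, before $S_0$ is known to be onto, to guarantee that the gradients $\nabla S_0g$ span $\R^n$ a.e., so that the linear map $A(x)$ is determined on all of $\R^n$ (one may use cut-off affine test fields on small balls for this). Since the matrices here are symmetric, one could alternatively bypass the compensated-compactness argument and work entirely within $\Gamma$-convergence: a subsequence of the quadratic forms $v\mapsto\frac12\int_\Omega A_k\nabla v\cdot\nabla v\,dx$ on $H^1_0(\Omega)$ $\Gamma$-converges, the $\Gamma$-limit is again a non-negative quadratic form with the same constants, and by the integral representation theorem of the localisation method \cite{DalMaso1993} it has the form $\frac12\int_\Omega A\nabla v\cdot\nabla v\,dx$ with $A\in\mathcal M_\alpha^\beta(\Omega)$; $G$-convergence then follows because adding the continuous perturbation $-\langle f,\cdot\rangle$ preserves $\Gamma$-convergence and the functionals are equicoercive, so their minimisers — the solutions of the two problems — converge.
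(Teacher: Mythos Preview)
The paper does not prove this theorem: it is quoted as a classical result with a reference to \cite[Theorem 2]{Murat1997}, so there is no argument in the paper to compare against. Your sketch is a faithful outline of the standard Murat--Tartar compensated-compactness proof, and the steps you flag as delicate (the locality of the limit flux, the a.e.\ spanning of $\R^n$ by the gradients $\nabla S_0g$, and the transfer of the bounds \eqref{eq:ellipticitycond} to the limit matrix via the div--curl lemma and weak lower semicontinuity) are precisely the places where the work lies; nothing in the outline is wrong. The alternative $\Gamma$-convergence route you mention at the end is also valid and is in fact closer to the spirit of the present paper, which records exactly the equivalence between $G$-convergence and $\Gamma$-convergence of the associated quadratic forms in Theorem~\ref{thm:equivalenceofGammaandG}.
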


Given $A_k,\,A\in\mathcal{M}_\alpha^\beta(\Omega) $ and $U\subset\Omega$ open, we denote by $Q_k(\cdot,U) $ and $Q(\cdot,U)$ the  quadratic functionals defined for every $v\in H^1_0(U)$ as 
\begin{align}
  \label{eq:quadraticfunctionalk}  Q_k(v,U)=\frac{1}{2}\int_UA_k\nabla{v}\cdot\nabla{v}\,dx,
    \\ \label{eq:quadraticfunctionalq} Q(v,U)=\frac{1}{2}\int_UA\nabla{v}\cdot\nabla{v}\,dx.
\end{align}

As it was first pointed out in  \cite{DeGiorgi1973}, the $G$-convergence of a sequence $A_k$  can be characterised in terms of  $\Gamma$-convergence of the associated quadratic functionals \eqref{eq:quadraticfunctionalk}. For the main properties of  $\Gamma$-convergence, we refer the reader to \cite{Braides2002,DalMaso1993}.

\begin{theorem}[{\cite[Theorem 24.5]{DalMaso1993}}]
		\label{thm:equivalenceofGammaandG}
Let $A_k$, $A\in\mathcal{M}_\alpha^\beta(\Omega)$. Then $A_k$ $G$-converges to $A$ in $\Omega$ if and only if $Q_k(\cdot,\Omega)$ $\Gamma$-converges to $Q(\cdot,\Omega)$ in the weak topology of $H^1_0(\Omega)$.
\end{theorem}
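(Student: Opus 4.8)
The plan is to prove the two implications separately, using three standard facts about $\Gamma$-convergence (all in \cite{DalMaso1993}): its stability under the addition of a weakly continuous perturbation, the fundamental theorem on convergence of minima and minimisers of an equicoercive sequence, and the compactness of $\Gamma$-convergence. For the implication ``$\Gamma$-convergence $\Rightarrow$ $G$-convergence'', suppose $Q_k(\cdot,\Omega)$ $\Gamma$-converges to $Q(\cdot,\Omega)$ in the weak topology of $H^1_0(\Omega)$ and fix $f\in H^{-1}(\Omega)$. Since $v\mapsto-\langle f,v\rangle$ is weakly continuous, $Q_k(\cdot,\Omega)-\langle f,\cdot\rangle$ $\Gamma$-converges to $Q(\cdot,\Omega)-\langle f,\cdot\rangle$; by the lower bound in \eqref{eq:ellipticitycond} this sequence is equicoercive in the weak topology (sublevel sets are bounded, hence weakly relatively compact by reflexivity), and its minimisers $u_k$, which are exactly the solutions in Definition \ref{def:Hconv}, are bounded in $H^1_0(\Omega)$. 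By the fundamental theorem, every weak cluster point of $(u_k)$ minimises $Q(\cdot,\Omega)-\langle f,\cdot\rangle$; since $\alpha>0$ this minimiser $u$ is unique and solves $-\mathrm{div}(A\nabla u)=f$. Hence $u_k\wto u$ and $A_k$ $G$-converges to $A$.

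For the converse, suppose $A_k$ $G$-converges to $A$. It is enough to show that every subsequence of $Q_k(\cdot,\Omega)$ has a further subsequence $\Gamma$-converging to $Q(\cdot,\Omega)$, since the Urysohn property of $\Gamma$-convergence then gives the full statement. Given a subsequence, by compactness extract a further subsequence, not relabelled, $\Gamma$-converging to some $\hat Q\colon H^1_0(\Omega)\to[0,+\infty]$. First I would check that $\hat Q$ is a bounded coercive quadratic form: the bound $\hat Q(v)\le\frac{\beta}{2}\|v\|_{H^1_0(\Omega)}^2$ comes from the $\Gamma$-liminf inequality with the constant sequence $v_k\equiv v$ and the upper bound in \eqref{eq:ellipticitycond}; the bound $\hat Q(v)\ge\frac{\alpha}{2}\|v\|_{H^1_0(\Omega)}^2$ comes from a recovery sequence $v_k\wto v$ (bounded since $\alpha>0$), the lower bound in \eqref{eq:ellipticitycond}, and the weak lower semicontinuity of the norm; positive $2$-homogeneity and the parallelogram identity for $\hat Q$ follow by feeding recovery sequences for $v$, for $tv$, and for $u\pm v$ into the identities $Q_k(tv_k,\Omega)=t^2Q_k(v_k,\Omega)$ and $Q_k(u_k+v_k,\Omega)+Q_k(u_k-v_k,\Omega)=2Q_k(u_k,\Omega)+2Q_k(v_k,\Omega)$ and using the $\Gamma$-liminf inequality in both directions. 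Thus $\hat Q(v)=\frac12\hat a(v,v)$ for a symmetric bilinear form $\hat a$ on $H^1_0(\Omega)$ with $\alpha\|v\|_{H^1_0(\Omega)}^2\le\hat a(v,v)\le\beta\|v\|_{H^1_0(\Omega)}^2$.

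To identify $\hat Q$, apply the first implication to this $\Gamma$-convergent subsequence: for every $f\in H^{-1}(\Omega)$ the solutions of Definition \ref{def:Hconv} converge weakly to the unique minimiser $\hat u$ of $\hat Q-\langle f,\cdot\rangle$, i.e. to the $\hat u$ with $\hat a(\hat u,w)=\langle f,w\rangle$ for all $w\in H^1_0(\Omega)$. But by the assumed $G$-convergence these same solutions converge weakly to the $u$ with $\int_\Omega A\nabla u\cdot\nabla w\,dx=\langle f,w\rangle$ for all $w$. Hence $\hat a(u,w)=\int_\Omega A\nabla u\cdot\nabla w\,dx$ for all $w\in H^1_0(\Omega)$ and every $u$ obtained in this way; since $f$ ranges over all of $H^{-1}(\Omega)$, the corresponding $u$ ranges over all of $H^1_0(\Omega)$, so $\hat a(u,w)=\int_\Omega A\nabla u\cdot\nabla w\,dx$ for all $u,w\in H^1_0(\Omega)$, i.e. $\hat Q=Q(\cdot,\Omega)$, which is the claim.

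The one genuinely delicate point is that the compactness theorem, the sequential description of $\Gamma$-convergence (recovery sequences, $\Gamma$-liminf inequality) and the Urysohn property are most conveniently available in metrisable settings, whereas the weak topology of $H^1_0(\Omega)$ is not metrisable on the whole space. This is circumvented by exploiting equicoercivity, so that every sequence that intervenes may be confined to a fixed ball of $H^1_0(\Omega)$, on which the weak topology is compact and metrisable and the standard metric theory applies. Everything else — the perturbation argument, equicoercivity from ellipticity, the parallelogram computation, and the Lax--Milgram identification of $\hat a$ — is routine. An alternative, heavier route for the identification step would be to invoke the integral representation theorems for $\Gamma$-limits of quadratic functionals, writing $\hat Q(v)=\frac12\int_\Omega\hat A\nabla v\cdot\nabla v\,dx$ with $\hat A\in\mathcal{M}_\alpha^\beta(\Omega)$, and then to use uniqueness of the $G$-limit; the route above avoids that machinery.
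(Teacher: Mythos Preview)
The paper does not give its own proof of this statement: Theorem~\ref{thm:equivalenceofGammaandG} is simply quoted, with attribution to \cite[Theorem~24.5]{DalMaso1993}, and the text moves on immediately to Proposition~\ref{prop:Tartarstorica}. There is therefore no in-paper argument against which to compare your proposal.

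That said, your proof is correct and is essentially the classical route. A couple of minor remarks. First, when you write ``apply the first implication to this $\Gamma$-convergent subsequence'' you are not literally reusing the first implication (which assumed the $\Gamma$-limit had the integral form $Q(\cdot,\Omega)$), but rather its \emph{mechanism}: perturb by $-\langle f,\cdot\rangle$, use equicoercivity and the fundamental theorem, and conclude that minimisers of $Q_k(\cdot,\Omega)-\langle f,\cdot\rangle$ converge weakly to the unique minimiser of $\hat Q-\langle f,\cdot\rangle$. You do spell this out in the next lines, so the argument is complete; just be aware that the phrasing could mislead a reader. Second, the parallelogram step is the one place where a referee might ask for a line more: you need recovery sequences $u_k\wto u$, $v_k\wto v$ with $Q_k(u_k,\Omega)\to\hat Q(u)$, $Q_k(v_k,\Omega)\to\hat Q(v)$, then use the $\Gamma$-$\liminf$ inequality on $u_k\pm v_k$ to get one inequality, and swap the roles of $(u,v)$ and $(u+v,u-v)$ to get the other. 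Your sketch is accurate but terse. Finally, your closing remark about metrisability is exactly right: equicoercivity confines all relevant sequences to norm-bounded sets, where the weak topology is metrisable, so the sequential characterisations and compactness of $\Gamma$-convergence apply without difficulty; this is also how the paper uses $\Gamma$-convergence throughout (cf.\ the appeal to \cite[Theorem~8.17]{DalMaso1993} just before Proposition~\ref{prop:gammalininfgammalimsup}).
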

We conclude this section with a result we will make use of in the following section.
\begin{proposition}[{\cite[Lemma 13.3]{Tartar2009}}]
    \label{prop:Tartarstorica}
    Let $A_k\in\mathcal{M}_\alpha^\beta(\Omega)$ be a sequence $G$-converging to $A$ in $\Omega$, and let $f_k$ be a bounded sequence in $H^{-1}(\Omega)$. Then there exist $g\in H^{-1}(\Omega)$ and not relabelled subsequences of $A_k$ and $f_k$, such that for any $U\subset\Omega$ open the solutions $u_k^U$ of the problem  
 \begin{equation}\label{eq:problemaconfk}
    \begin{cases}
    -\textup{div}(A_k\nabla{u^U_k})=f_k\quad\text{ in } U,\\
    u^U_k\in H^1_0(U),
    \end{cases}
    \end{equation}
 converge weakly in $H^1_0(U)$   to the solution  $u^U$ of 
 \begin{equation}\label{eq:problemacong}
    \begin{cases}
    -\textup{div}(A\nabla{u^U})=g\quad\text{ in }U,\\
    u^U\in H^1_0(U).
    \end{cases}
    \end{equation}
    \end{proposition}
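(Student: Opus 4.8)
The plan is to reduce the statement to the corrector theory of $H$-convergence combined with the div--curl lemma; the whole point of the argument will be to produce one ``effective right-hand side'' $g$ that serves \emph{every} open $U\subset\Omega$ simultaneously.

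First I would fix a representation $f_k=-\mathrm{div}\,h_k$ with $h_k\in[L^2(\Omega)]^n$ and $\|h_k\|_{[L^2(\Omega)]^n}=\|f_k\|_{H^{-1}(\Omega)}$ (possible since $h\mapsto-\mathrm{div}\,h$ maps $[L^2(\Omega)]^n$ onto $H^{-1}(\Omega)$), and, along a not relabelled subsequence, assume $h_k\rightharpoonup h_0$ in $[L^2(\Omega)]^n$. Since $A_k$ $G$-converges to $A$, the classical corrector theory for $H$-convergence (see \cite{Tartar2009}) supplies functions $w^1_k,\dots,w^n_k$ with $w^j_k-x_j\rightharpoonup 0$ in $H^1_{\mathrm{loc}}(\Omega)$, $A_k\nabla w^j_k\rightharpoonup Ae_j$ in $[L^2_{\mathrm{loc}}(\Omega)]^n$, and $-\mathrm{div}(A_k\nabla w^j_k)\to -\mathrm{div}(Ae_j)$ in $H^{-1}_{\mathrm{loc}}(\Omega)$. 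The products $\nabla w^j_k\cdot h_k$ are bounded in $L^1_{\mathrm{loc}}(\Omega)$, so after a further subsequence $\nabla w^j_k\cdot h_k\rightharpoonup\lambda_j$ in $\mathcal D'(\Omega)$ for $j=1,\dots,n$; set $\lambda:=(\lambda_1,\dots,\lambda_n)$. The essential observation is that $h_0$ and $\lambda$ are objects on $\Omega$ that do not refer to any subdomain.

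Next I would fix an arbitrary open $U\subset\Omega$. By ellipticity $\|u^U_k\|_{H^1_0(U)}\le\|f_k\|_{H^{-1}(\Omega)}/\alpha$, so this sequence is bounded; let $v\in H^1_0(U)$ be any weak cluster point, taken along a sub-subsequence on which also $A_k\nabla u^U_k\rightharpoonup\sigma$ in $[L^2(U)]^n$. Since $-\mathrm{div}(A_k\nabla u^U_k-h_k)=0$ in $U$, the field $A_k\nabla u^U_k-h_k$ is divergence free; pairing it with the curl-free field $\nabla w^j_k$ by the div--curl lemma, pairing $\nabla u^U_k$ with $A_k\nabla w^j_k$ in the same way, and using the symmetry of $A_k$ and $A$, I would obtain in $\mathcal D'(U)$
\[
 \sigma_j-(h_0)_j=\lim_k\nabla w^j_k\cdot(A_k\nabla u^U_k-h_k)=(A\nabla v)_j-\lambda_j,\qquad j=1,\dots,n,
\]
whence $\sigma=A\nabla v+h_0-\lambda$ on $U$; in particular $\lambda=A\nabla v+h_0-\sigma\in[L^2(U)]^n$. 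Passing to the limit in $\mathrm{div}(A_k\nabla u^U_k-h_k)=0$ then gives $-\mathrm{div}(A\nabla v)=-\mathrm{div}\,\lambda$ in $U$.

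Since the bilinear form $(v,\phi)\mapsto\int_U A\nabla v\cdot\nabla\phi$ is coercive on $H^1_0(U)$, the problem $-\mathrm{div}(A\nabla v)=-\mathrm{div}\,\lambda$, $v\in H^1_0(U)$, has a unique solution; hence every weak cluster point of the bounded sequence $u^U_k$ equals it, so $u^U_k\rightharpoonup u^U$ in $H^1_0(U)$ and $u^U$ solves \eqref{eq:problemacong} with $g:=-\mathrm{div}\,\lambda$. Running the identity $\lambda=A\nabla u^U+h_0-\sigma$ along an exhaustion of $\Omega$ by open sets, and noting that $\|A\nabla u^U\|_{[L^2(U)]^n}$, $\|\sigma\|_{[L^2(U)]^n}$ and $\|h_0\|_{[L^2(U)]^n}$ are all bounded by a constant depending only on $\alpha$, $\beta$ and $\sup_k\|f_k\|_{H^{-1}(\Omega)}$, I would conclude $\lambda\in[L^2(\Omega)]^n$, so that indeed $g\in H^{-1}(\Omega)$. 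The hard part is not any single estimate but the demand that one subsequence serve \emph{all} open $U$: this is exactly what dictates the order of the extractions, namely to fix the subsequence once and for all at the level of $h_k$ and of the finitely many products $\nabla w^j_k\cdot h_k$, after which the div--curl identification together with uniqueness of the limit problem pins down $\lim_k u^U_k$ on every $U$ with no $U$-dependent extraction. The only genuinely substantial ingredient is the existence and the stated properties of the correctors $w^j_k$; the remainder is a routine combination of the div--curl lemma, weak lower semicontinuity of the $L^2$ norm, and the Lax--Milgram theorem.
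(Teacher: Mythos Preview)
The paper does not give its own proof of this proposition: it is quoted verbatim from \cite[Lemma~13.3]{Tartar2009}, and Remark~\ref{re:expressionforg} simply points to the corrector matrices in \cite[Chapter~13]{Tartar2009} for the identification of $g$. Your argument reproduces precisely that standard route---write $f_k=-\mathrm{div}\,h_k$, extract weak limits of $h_k$ and of the products $\nabla w^j_k\cdot h_k$, then use the correctors together with the div--curl lemma and symmetry of $A_k$ to identify the flux limit as $A\nabla v+h_0-\lambda$, and finally invoke uniqueness for the limit problem so that no $U$-dependent extraction is needed. This is correct and is essentially Tartar's proof, so there is nothing to contrast with the paper's treatment.

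One small point worth tightening: when you extract $\lambda_j$ as a distributional limit of the $L^1_{\mathrm{loc}}$-bounded products $\nabla w^j_k\cdot h_k$, what you actually obtain a priori is weak-$*$ convergence of Radon measures on compacta (via a diagonal argument over an exhaustion), not merely convergence in $\mathcal D'(\Omega)$; you then upgrade $\lambda$ to $[L^2(\Omega)]^n$ a posteriori via the identity $\lambda=A\nabla u^{U}+h_0-\sigma$ and the uniform bounds. This is exactly what you do, but it is worth saying explicitly that the passage from ``bounded in $L^1_{\mathrm{loc}}$'' to ``subsequential limit in $\mathcal D'$'' goes through the space of measures.
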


\begin{remark}\label{re:expressionforg}
  An expression for the term $g$ in the statement of Proposition \ref{prop:Tartarstorica} can be determined by means of the corrector matrices associated to $A_k$. We refer the reader to \cite[Chapter 13]{Tartar2009} for more details. This formula implies that in general $g$ is not a limit point of $f_k$. However, from the same formula, it also follows that, if $A_k\to A$  pointwise a.e. in $\Omega$ and  $f_k\rightharpoonup f$ weakly in $H^{-1}(\Omega)$, then  $f=g$. 
\end{remark}

\section{$\Gamma$-convergence of quadratic functionals}

Let $A_k, A\in\mathcal{M}_\alpha^\beta(\Omega)$, with  $A_k$ $G$-converging to $A$ in $\Omega$, and let $U\subset\Omega$ be an open subset. As in the previous section, $u_k^U$  denotes the sequence of solutions to the elliptic problems
\begin{equation}\label{eq:solutioninU}
    \begin{cases}
         -\textup{div}(A_k\nabla{u_k^U})=f_k\quad\textup{in }U,\\
         u_k^U\in H^1_0(U).
    \end{cases}
\end{equation}
  We  denote by $u^U$  the weak limit in $H^1_0(\Omega)$ of $u_k^U$ (whose existence is guaranteed by Proposition \ref{prop:Tartarstorica}, up to a not relabelled subsequence, independent of $U$), and by $g\in H^{-1}(\Omega)$ the right-hand-side of  \eqref{eq:problemacong}.
  
Theorem \ref{thm:equivalenceofGammaandG} and Proposition
 \ref{prop:Tartarstorica} will allow us to compute the $\Gamma$-limit $F(\cdot,U)$ in the weak topology of $H^1_0(U)$ of the functionals $F_k(\cdot,U)$ defined for $v\in H^1_0(U)$ as
\begin{equation}\label{eq:functionalsFk}
    F_k(v,U)=\frac{1}{2}\int_UA_k\nabla{v}\cdot\nabla{v}\,dx-\langle f_k,v\rangle=Q_k(v,U)-\langle f_k,v\rangle.
\end{equation}
As we will see, the $\Gamma$-limit $F(\cdot,U)$ is closely related to the functional $F_0(\cdot, U)$,
defined for $v\in H^1_0(U)$ as 
\begin{equation}\label{eq:functionalF0}
    F_0(v,U)=\frac{1}{2}\int_U A\nabla{v}\cdot\nabla{v}\,dx-\langle g,v\rangle=Q(v,U)-\langle g,v\rangle.
\end{equation}

Before stating the main result of this work, we briefly recall a definition used in the study of increasing set functions .
\begin{definition}[{\cite[Definition 14.10]{DalMaso1993}}]\label{def:rich}
We say that a collection $\mathcal{R}$ of open subsets of $\Omega$ is rich if, for every family  $(U_t)_{t\in(0,1)}$ of open subsets of $\Omega$ such that
\begin{equation*}
     t_1<t_2\Longrightarrow U_{t_1}\subset\subset U_{t_2},
\end{equation*}
 the set $\{t\in (0,1):\,\,U_{t}\notin\mathcal{R}\}$ is at most countable.
\end{definition}

Examples of rich collections of open sets can be found in \cite[Examples 14.11 and 14.12]{DalMaso1993}. On the contrary, the collection of all open balls contained in~$\Omega$ as well as the collection of all open subsets of $\Omega$ with smooth boundary are not rich (this can be seen by considering as $(U_t)_{t\in (0,1)}$ an increasing family of rectangles).  Unfortunately, the statement of the next theorem cannot be simplified by replacing $\mathcal R$ by the collection of all open subsets of $\Omega$, because of the example given in Corollary~\ref{controesempio2}. By taking $U_0$ equal to an open ball in Remark \ref{re:dopoesempio}, we see that in general we cannot replace $\mathcal{R}$ with the collection of all open balls contained in $\Omega$.
\begin{theorem}\label{thm:maintheorem}
Let $A_k\in \mathcal{M}_\alpha^\beta(\Omega)$ be a sequence of  matrices $G$-converging to $A\in\mathcal{M}_\alpha^\beta(\Omega)$ in $\Omega$. Consider the sequence of functionals $F_k(\cdot, U)$ defined by \eqref{eq:functionalsFk} for $U\subset\Omega$ open. Then there exist a not relabelled subsequence, a non-negative bounded Radon measure $\nu$ on $\Omega$, and a rich collection $\mathcal{R}$ of open subsets of $\Omega$ such that for every $U\in \mathcal{R}$ the sequence $F_k(\cdot,U)$ $\Gamma$-converges in the weak topology of $H^1_0(U)$ to the functional defined as
\begin{equation}\label{eq:limitmain}
    F(v,U)=F_0(v,U)-\nu(U)\quad \text{for every }v\in H^1_0(U),
\end{equation}
where $F_0(\cdot,U)$ is the functional defined in \eqref{eq:functionalF0}, with $g$ as in \eqref{eq:problemacong}.
\end{theorem}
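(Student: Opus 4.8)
The plan is to combine the abstract compactness theorem for $\Gamma$-convergence (\cite[Theorem 16.9]{DalMaso1993}) with the structural information provided by $G$-convergence and Proposition~\ref{prop:Tartarstorica}, and then to identify the $\Gamma$-limit explicitly.

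First I would apply the compactness result for increasing set functions: extracting a subsequence, there is a function $F(\cdot,U)$ that is the $\Gamma$-limit of $F_k(\cdot,U)$ in the weak topology of $H^1_0(U)$ for every $U$ in a rich collection $\mathcal R$. (Here one must check the hypotheses of \cite[Theorem 16.9]{DalMaso1993}: the weak topology on bounded sets of $H^1_0(U)$ is metrizable, and the equicoercivity and the fundamental estimate follow from the uniform ellipticity/boundedness in \eqref{eq:ellipticitycond} and the uniform bound on $f_k$ in $H^{-1}$.) Next, using that $A_k$ $G$-converges to $A$, Theorem~\ref{thm:equivalenceofGammaandG} gives that $Q_k(\cdot,\Omega)$ $\Gamma$-converges to $Q(\cdot,\Omega)$; by the localisation method the same holds for $Q_k(\cdot,U)$ on a rich collection. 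The key point is that the linear perturbation $-\langle f_k,v\rangle$ is, up to the subsequence of Proposition~\ref{prop:Tartarstorica}, \emph{continuous} along recovery sequences in the following sense: if $v_k\wto v$ weakly in $H^1_0(U)$ and $F_k(v_k,U)$ is bounded, then $A_k\nabla v_k$ is bounded in $[L^2(U)]^n$, and one can test the equation \eqref{eq:solutioninU} against $v_k$ and $v_k$ against the equation for $u_k^U$ (a compensated-compactness / div-curl argument, exactly as in the proof of Proposition~\ref{prop:Tartarstorica}) to show that $\langle f_k,v_k\rangle\to\langle g,v\rangle$. This handles the continuity of the perturbation on the relevant class of sequences.

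From this I would deduce, for $U\in\mathcal R$, both inequalities defining the $\Gamma$-limit. For the $\Gamma$-liminf inequality: given $v_k\wto v$, split $F_k(v_k,U)=Q_k(v_k,U)-\langle f_k,v_k\rangle$; if the left side is bounded, then $\langle f_k,v_k\rangle\to\langle g,v\rangle$ by the argument above, and $\liminf Q_k(v_k,U)\ge Q(v,U)$ by the $\Gamma$-liminf for the quadratic parts, giving $\liminf F_k(v_k,U)\ge F_0(v,U)$. Hence $F(v,U)\ge F_0(v,U)-c(U)$ for a constant $c(U):=F_0(v,U)-F(v,U)$ that, a priori, could depend on $v$; testing with $v=0$ and with recovery sequences shows $c(U)$ is in fact independent of $v$ — indeed, taking a recovery sequence $v_k\wto v$ for $Q_k(\cdot,U)\to Q(\cdot,U)$ and using $\langle f_k,v_k\rangle\to\langle g,v\rangle$ yields the matching upper bound $F(v,U)\le F_0(v,U)-\big(\lim(Q_k(v_k,U)-Q(v,U)) \text{ correction}\big)$; carefully, the recovery sequence for the quadratic part makes $Q_k(v_k,U)\to Q(v,U)$, so $F_k(v_k,U)\to F_0(v,U)$, which would give $F(v,U)\le F_0(v,U)$ — so the constant comes from the fact that these recovery sequences need not be admissible once we also match the affine term. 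The resolution: the $\Gamma$-limit of $Q_k(\cdot,U)-\langle f_k,\cdot\rangle$ equals $(\Gamma\text{-}\lim Q_k(\cdot,U)) + (\text{continuous limit of }-\langle f_k,\cdot\rangle)$ \emph{plus} a constant coming from the defect in strong convergence of $f_k$, precisely because $u_k^U$ does not converge strongly; writing $F_k(v_k,U)=F_k(u_k^U,U)+\big(F_k(v_k,U)-F_k(u_k^U,U)\big)$ and using that $u_k^U$ minimises $F_k(\cdot,U)$, the second term is $\ge 0$ and, along a recovery sequence, converges to $F_0(v,U)-F_0(u^U,U)$, so $F(v,U)=\lim F_k(u_k^U,U)+F_0(v,U)-F_0(u^U,U)$, i.e. $F(v,U)=F_0(v,U)-\nu(U)$ with
\[
\nu(U):=F_0(u^U,U)-\lim_k F_k(u_k^U,U).
\]
This identifies the constant and shows it is independent of $v$.

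It remains to show that $U\mapsto\nu(U)$ is (the restriction to $\mathcal R$ of) a non-negative bounded Radon measure. Non-negativity: $\nu(U)=F_0(u^U,U)-\lim F_k(u_k^U,U)\ge 0$ follows from the $\Gamma$-liminf inequality applied to the constant sequence's minimisers, or directly: $F_k(u_k^U,U)=\min F_k(\cdot,U)\le F_k(w,U)$ for any fixed $w$, and optimising gives $\lim F_k(u_k^U,U)\le \min F_0(\cdot,U)=F_0(u^U,U)$. Boundedness: the energies $F_k(u_k^U,U)$ are bounded below uniformly (by ellipticity and the $H^{-1}$-bound on $f_k$, using Young's inequality), and $F_0(u^U,U)$ is bounded above, uniformly in $U\subset\Omega$. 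For the measure property I would invoke the measure-characterisation criterion for increasing set functions (\cite[Theorem 14.23 and Theorem 16.10]{DalMaso1993}): it suffices to verify that $\nu$, extended to all open subsets of $\Omega$ via inner regularity on $\mathcal R$, is increasing, inner regular, superadditive, and subadditive; superadditivity and subadditivity follow from the locality of $g$ (Proposition~\ref{prop:Tartarstorica}, so $u_k^{U'}$ on a subset $U'\subset U$ relates to $u_k^U$ via the same limit problem) together with the analogous additivity properties of the $\Gamma$-limits $F(\cdot,U)$ established in the localisation method, and increasing/inner regularity are built into the framework. The main obstacle I expect is precisely the subadditivity estimate for $\nu$ on overlapping open sets — this requires a careful cut-off (fundamental estimate) argument to glue recovery sequences on $U_1$ and $U_2$ into one on $U_1\cup U_2$ while controlling the error in the affine term $\langle f_k,\cdot\rangle$, using again the compensated-compactness identity $\langle f_k,v_k\rangle\to\langle g,v\rangle$ to kill the cross terms supported on $U_1\cap U_2$.
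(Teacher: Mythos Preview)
Your central claim is false: you assert that along any weakly convergent sequence $v_k\rightharpoonup v$ in $H^1_0(U)$ one has $\langle f_k,v_k\rangle\to\langle g,v\rangle$ by a div--curl argument. If this were true, then for \emph{every} such sequence
\[
\liminf_k F_k(v_k,U)=\liminf_k Q_k(v_k,U)-\langle g,v\rangle\ge Q(v,U)-\langle g,v\rangle=F_0(v,U),
\]
while taking a recovery sequence for $Q_k$ would give the matching upper bound, forcing $F=F_0$ and $\nu\equiv 0$. The whole point of the theorem (and the example in Proposition~\ref{controesempio} makes this concrete) is that $\nu$ is in general nonzero. The div--curl lemma does not apply here: with $E_k=\nabla v_k$ and $D_k=A_k\nabla u_k^U$ one has $\mathrm{curl}\,E_k=0$, but $\mathrm{div}\,D_k=-f_k$ is only \emph{bounded} in $H^{-1}$, not compact, precisely because $f_k$ converges only weakly. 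You partly sense the contradiction (``this would give $F(v,U)\le F_0(v,U)$'') but you misdiagnose it as an admissibility issue for recovery sequences; the recovery sequences are perfectly admissible, it is the continuity claim that fails. You then return to the same false identity in your subadditivity argument, so the gap propagates.

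Your ``resolution'' via minimisers is in fact the right idea, and it is essentially what the paper does, but it has to be carried out without the continuity claim. The clean device is the quadratic identity: since $u_k^U$ solves \eqref{eq:solutioninU}, one has
\[
F_k(v,U)+Q_k(u_k^U,U)=Q_k(v-u_k^U,U)\qquad\text{for every }v\in H^1_0(U),
\]
and likewise $F_0(v,U)+Q(u^U,U)=Q(v-u^U,U)$. Now the linear term has disappeared; one computes $\Gamma$-$\liminf$ and $\Gamma$-$\limsup$ of $F_k(\cdot,U)$ directly from the $\Gamma$-convergence of the translated quadratic functionals $Q_k(\cdot-u_k^U,U)$ to $Q(\cdot-u^U,U)$ (Theorem~\ref{thm:equivalenceofGammaandG} plus $u_k^U\rightharpoonup u^U$), obtaining $F'(v,U)=F_0(v,U)-\nu'(U)$ and $F''(v,U)=F_0(v,U)-\nu''(U)$ with $\nu'(U)=\limsup_k Q_k(u_k^U,U)-Q(u^U,U)$ and $\nu''(U)=\liminf_k Q_k(u_k^U,U)-Q(u^U,U)$. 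This matches your formula $\nu(U)=F_0(u^U,U)-\lim_k F_k(u_k^U,U)$ once the limit exists, and explains transparently why the constant is independent of~$v$. For the measure property (De Giorgi--Letta), the subadditivity is much simpler than you suggest: one works with $\nu'$ at $v=0$, takes an optimal sequence $z_k\rightharpoonup 0$ on $V'\cup W$, cuts it as $v_k=\varphi z_k$, $w_k=(1-\varphi)z_k$, and the linear part satisfies $\langle f_k,v_k\rangle+\langle f_k,w_k\rangle=\langle f_k,z_k\rangle$ \emph{by linearity}---no compensated compactness is needed there at all.
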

By a general result on $\Gamma$-convergence (see \cite[Theorem 7.8]{DalMaso1993}) Theorem \ref{thm:maintheorem} implies that for every $U\in\mathcal R$, if $u_k^U$ denotes the minimiser  of \eqref{eq:functionalsFk} and  $u^U$ denotes the minimiser  of \eqref{eq:functionalF0}, then the energies $F_k(u_k^U,U)$ converge to $F_0(u^U,U)-\nu(U)$, hence $\nu(U)$ can be interpreted as the asymptotic energy gap between the minimisers of  $F_k(\cdot,U)$ and $F_0(\cdot,U)$. The countable additivity of this gap is a relevant property that completes the description of the behaviour of the solutions of \eqref{eq:problemadacambiare}, with $\Omega$ replaced by $U$,  and of their energies.

To prove Theorem \ref{thm:maintheorem} we proceed in several steps. The first one is to compute the $\Gamma$-$\liminf$ and the $\Gamma$-$\limsup$ of $F_k(\cdot,U)$.
To deal with this problem, we introduce two set functions $\nu'$ and  $\nu''$ defined for every open set $U\subset\Omega$ by
\begin{align}
 \label{eq:defnuinf}  -\nu'(U)=\inf_{\substack{v_k\in H^1_0(U)\\v_k\rightharpoonup 0}}\liminf_{\substack{k\to\infty}} F_k(v_k,U),\\
  \label{eq:defnusup}-\nu''(U)=\inf_{\substack{v_k\in H^1_0(U)\\v_k\rightharpoonup 0}}\limsup_{\substack{k\to\infty}} F_k(v_k,U),
\end{align}
where $v_k\rightharpoonup 0$ means that $v_k$ converges to $0$ in the weak topology of $H^1_0(U)$.
It is easy to see that the infimum is attained. It is immediate to check that the set functions $\nu'$ and $\nu''$ are both increasing and bounded, and that $0\leq\nu''\leq\nu'$.

Note that, being the functionals $F_k(\cdot,U)$ equi-coercive in the weak topology of $H^1_0(U)$, the $\Gamma$-convergence of the sequence $F_k(\cdot, U)$ can be characterised sequentially (see  \cite[Theorem 8.17]{DalMaso1993}), so that $-\nu'(U)$ and $-\nu''(U)$ are precisely the value of the $\Gamma$-$\liminf$  and of the $\Gamma$-$\limsup$ of $F_k(\cdot,U)$ at $v=0$. In what follows  the $\Gamma$-$\liminf$  and the $\Gamma$-$\limsup$ of $F_k(\cdot,U)$ in the weak topology of $H^1_0(U)$ will always be denoted by $F'(\cdot,U)$ and by $F''(\cdot,U)$  and, when they coincide, the $\Gamma$-limit will be denoted by  $F(\cdot,U)$.

\begin{proposition}\label{prop:gammalininfgammalimsup}
Let $U\subset\Omega$ be open and let $F_k(\cdot,U)$ be the functionals defined in \eqref{eq:functionalsFk}. Then:
\begin{enumerate}
    \item[(a)]For every $v\in H^1_0(U)$ 
    \begin{equation}\label{eq:gammaliminf}
        F'(v,U)=F_0(v,U)-\nu'(U);
    \end{equation} 
    \item[(b)] For every $v\in H^1_0(U)$ 
    \begin{equation}\label{eq:gammalimsup}
        F''(v,U)=F_0(v,U)-\nu''(U).
    \end{equation} 
\end{enumerate}
 \end{proposition}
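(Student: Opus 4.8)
The plan is to prove parts (a) and (b) by establishing the two $\Gamma$-convergence inequalities separately, using the decomposition $F_k(v,U)=Q_k(v,U)-\langle f_k,v\rangle$ and exploiting the already-established $\Gamma$-convergence of the quadratic parts $Q_k(\cdot,U)$ (Theorem~\ref{thm:equivalenceofGammaandG}) together with the convergence of the solutions (Proposition~\ref{prop:Tartarstorica}). The key algebraic observation is the ``completing the square'' identity: writing $v=u_k^U+w_k$, where $u_k^U$ solves \eqref{eq:solutioninU}, one gets $F_k(v,U)=F_k(u_k^U,U)+Q_k(w_k,U)$, since the cross term $\int_U A_k\nabla u_k^U\cdot\nabla w_k\,dx-\langle f_k,w_k\rangle$ vanishes by the Euler--Lagrange equation. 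Thus $F_k(v,U)+F_k(u_k^U,U)^{-}$... more precisely $F_k(v,U)=F_k(u_k^U,U)+Q_k(v-u_k^U,U)$, which reduces the study of $F_k$ to the study of the nonnegative quadratic functional $w\mapsto Q_k(w,U)$ shifted by the (convergent, by the general result on minima) sequence of minimum values $F_k(u_k^U,U)$.

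The main steps I would carry out are as follows. \emph{Step 1: the $\liminf$ inequality for $F'$.} Take any $v\in H^1_0(U)$ and any $v_k\rightharpoonup v$ in $H^1_0(U)$. Write $v_k=u_k^U+w_k$; then $w_k\rightharpoonup v-u^U$ weakly, where $u^U$ solves \eqref{eq:problemacong}. Using $F_k(v_k,U)=F_k(u_k^U,U)+Q_k(w_k,U)$, the convergence $F_k(u_k^U,U)\to F_0(u^U,U)-\nu'(U)$... here is where care is needed: by definition of $\nu'$ one only gets $\liminf F_k(u_k^U,U)\ge F_0(u^U,U)-\nu'(U)$ is not quite the right route. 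Instead, the cleaner approach is: $F_k(v_k,U)=Q_k(v_k,U)-\langle f_k,v_k\rangle$. Since $Q_k(\cdot,U)$ $\Gamma$-converges to $Q(\cdot,U)$ we'd like $\liminf Q_k(v_k,U)\ge Q(v,U)$, but this requires the $\Gamma$-$\liminf$ inequality along \emph{this particular} sequence, which is exactly what $\Gamma$-convergence gives. For the linear term, $\langle f_k,v_k\rangle$ does not converge to $\langle g,v\rangle$ in general — this is precisely the source of the defect $\nu'$. To handle it, subtract off the solution: set $w_k=v_k-u_k^U\rightharpoonup v-u^U=:w$. Then $\langle f_k,v_k\rangle=\langle f_k,u_k^U\rangle+\langle f_k,w_k\rangle$, and using the equation, $\langle f_k,w_k\rangle=\int_U A_k\nabla u_k^U\cdot\nabla w_k\,dx\to \int_U A\nabla u^U\cdot\nabla w\,dx=\langle g,w\rangle$ (the last step by a div-curl / compensated compactness argument, since $A_k\nabla u_k^U$ converges weakly in $L^2$ and $\mathrm{div}(A_k\nabla u_k^U)=-f_k$ is compact in $H^{-1}$ while $\nabla w_k$ is a gradient converging weakly). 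So the only genuinely non-convergent piece is $\langle f_k,u_k^U\rangle=2Q_k(u_k^U,U)-2F_k(u_k^U,U)$, i.e. everything is controlled once we know $F_k(u_k^U,U)$ converges, which follows because $u_k^U$ minimises $F_k(\cdot,U)$ and $F'(\cdot,U)\le F''(\cdot,U)$ with infima related to $\nu'$, $\nu''$.

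\emph{Step 2: assemble.} Combining, $F_k(v_k,U)=F_k(u_k^U,U)+Q_k(w_k,U)$ with $w_k\rightharpoonup w=v-u^U$, so $\liminf_k F_k(v_k,U)\ge \lim_k F_k(u_k^U,U)+\liminf_k Q_k(w_k,U)\ge \big(F_0(u^U,U)-\nu'(U)\big)+Q(v-u^U,U)$ by the $\Gamma$-$\liminf$ inequality for $Q_k$ (here $\lim_k F_k(u_k^U,U)=F_0(u^U,U)-\nu'(U)$ is obtained by testing the definition of $\nu'$ with the recovery sequence for $w=v-u^U$ combined with $u_k^U$, and noting the reverse bound comes from taking $v_k=u_k^U$, i.e.\ $w_k=0$, in \eqref{eq:defnuinf}). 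A direct computation using the Euler--Lagrange equation for $u^U$ shows $F_0(u^U,U)+Q(v-u^U,U)=F_0(v,U)$, which gives $F'(v,U)\ge F_0(v,U)-\nu'(U)$. For the matching upper bound, given $v$, take a recovery sequence $w_k\rightharpoonup v-u^U$ for $Q_k$ realising $\limsup Q_k(w_k,U)=Q(v-u^U,U)$, and a sequence realising $\lim F_k(u_k^U,U)$; set $v_k=u_k^U+w_k\rightharpoonup v$; then $\limsup F_k(v_k,U)\le F_0(v,U)-\nu'(U)$... but one must check this recovery sequence is admissible, i.e.\ that it achieves the $\Gamma$-$\liminf$ value, which forces us to use the $\Gamma$-$\limsup$ characterization of $\nu'$ carefully — this is where part (a) and part (b) diverge, with (b) using $\nu''$ and the $\limsup$ throughout. \emph{Step 3: part (b)} is entirely parallel, replacing $\liminf$ by $\limsup$ everywhere and $\nu'$ by $\nu''$; the same decomposition $F_k(v_k,U)=F_k(u_k^U,U)+Q_k(v_k-u_k^U,U)$ and the div-curl argument for the cross term go through verbatim.

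The main obstacle I anticipate is \textbf{justifying the convergence of the cross term and of $F_k(u_k^U,U)$ simultaneously in a non-circular way}. The cleanest resolution is the identity $F_k(v,U)=F_k(u_k^U,U)+Q_k(v-u_k^U,U)$ valid for \emph{all} $v\in H^1_0(U)$ (an exact, $k$-by-$k$ statement, needing only the Euler--Lagrange equation), which instantly shows that the $\Gamma$-$\liminf$ (resp.\ $\Gamma$-$\limsup$) of $F_k(\cdot,U)$ at $v$ equals $\liminf_k F_k(u_k^U,U)$ (resp.\ $\limsup_k$) plus the $\Gamma$-$\liminf$ (resp.\ $\Gamma$-$\limsup$) of $Q_k(\cdot,U)$ evaluated at $v-u^U$ — provided one checks that shifting by the convergent, $U$-independent sequence $u_k^U\rightharpoonup u^U$ does not disturb the $\Gamma$-limit of $Q_k$, which is standard since $Q_k\ge 0$ and translation by a weakly convergent sequence is a continuous perturbation in this setting. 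Then by Theorem~\ref{thm:equivalenceofGammaandG} the $\Gamma$-limit of $Q_k(\cdot,U)$ at $v-u^U$ is exactly $Q(v-u^U,U)$, and evaluating the definitions \eqref{eq:defnuinf}--\eqref{eq:defnusup} at $v=0$ (equivalently, using that $u_k^U$ is the minimiser) identifies $\liminf_k F_k(u_k^U,U)$ and $\limsup_k F_k(u_k^U,U)$ with $-\nu'(U)+F_0(u^U,U)$ and $-\nu''(U)+F_0(u^U,U)$ respectively, once the elementary identity $F_0(v,U)=F_0(u^U,U)+Q(v-u^U,U)$ is invoked. Assembling these pieces yields \eqref{eq:gammaliminf} and \eqref{eq:gammalimsup}.
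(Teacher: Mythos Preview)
Your final approach---the completing-the-square identity $F_k(v,U)=F_k(u_k^U,U)+Q_k(v-u_k^U,U)$ together with the $\Gamma$-convergence of $Q_k$ and evaluation at $v=0$ to identify the constant---is correct and is exactly the paper's proof (the paper writes the identity as $F_k(v,U)+\alpha_k=Q_k(v-u_k^U,U)$ with $\alpha_k:=Q_k(u_k^U,U)=-F_k(u_k^U,U)$, and the companion identity $F_0(v,U)+\alpha=Q(v-u^U,U)$). The div--curl/compensated-compactness detour you sketch in Step~1 is unnecessary, as you yourself conclude.

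One small caveat: your claim that the identity ``instantly shows'' $F'(v,U)=\liminf_k F_k(u_k^U,U)+Q(v-u^U,U)$ glosses over the fact that $F_k(u_k^U,U)=-\alpha_k$ need not converge, so one cannot simply split $\liminf_k(-\alpha_k+Q_k(w_k,U))$ as a sum. The paper handles this by the elementary inequalities $\liminf_k F_k(v_k,U)+\alpha''\ge\liminf_k\bigl(F_k(v_k,U)+\alpha_k\bigr)$ for the lower bound and $F'(v,U)+\alpha''\le\limsup_k\bigl(F_k(v_k,U)+\alpha_k\bigr)=\limsup_k Q_k(z_k,U)$ (with $z_k$ a recovery sequence for $Q_k$ at $v-u^U$) for the upper bound; this is routine but should be stated rather than absorbed into ``standard''.
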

\begin{proof}
We prove only \eqref{eq:gammaliminf}, the proof of \eqref{eq:gammalimsup} being analogous. Let $Q_k(\cdot,U)$ and $Q(\cdot,U)$ be the quadratic functionals defined in \eqref{eq:quadraticfunctionalk} and \eqref{eq:quadraticfunctionalq}. We introduce the non-negative constants $\alpha_k,\,\alpha',\,\alpha'',$ and $\alpha$ defined as 
\begin{equation*}
    \alpha_k=Q_k(u_k^U,U),\,\,\,\,\alpha'=\liminf_{k\to\infty}\alpha_k,\,\,\,\, \alpha''=\limsup_{k\to\infty}\alpha_k,\,\,\,\,\alpha=Q(u^U,U).
\end{equation*}
Since $u_k^U$ and $u^U$ are the solutions to \eqref{eq:problemaconfk} and \eqref{eq:problemacong}, it follows that
\begin{eqnarray}
\label{eq:identityLiminf}&
F_k(v,U)+\alpha_k=Q_k(v-u_k^U,U)\quad \textup{ for every }v\in H^1_0(U),\\
&\label{eq:identityLiminf2}F_0(v,U)+\alpha=Q(v-u^U,U)\quad \textup{ for every }v\in H^1_0(U).
\end{eqnarray}
Fix $v\in H^1_0(U)$. Theorem \ref{thm:equivalenceofGammaandG} then implies that $Q_k(\cdot,U)$ $\Gamma$-converges to $Q(\cdot, U)$ in the weak topology of $H^1_0(U)$, so that if $v_k\rightharpoonup v$ weakly in $H^1_0(U)$ the liminf inequality,\eqref{eq:identityLiminf}, and \eqref{eq:identityLiminf2} yield
\begin{align}
\nonumber\liminf_{k\to\infty}F_k(v_k,U)+\alpha''&\geq\liminf_{k\to\infty} (F_k(v_k,U)+\alpha_k)=\liminf_{k\to\infty}Q_k(v_k-u_k^U,U)\\ \label{eq:identityliminfnok}&\geq Q(v-u^U,U)=F_0(v,U)+\alpha,
\end{align}
whence, taking the infimum with respect to all the sequences $v_k$ converging to $v$ weakly in $H^1_0(U)$,
\begin{equation}\label{eq:geqGammaliminf}
    F'(v,U)\geq F_0(v,U)+\alpha-\alpha''.
\end{equation}

To prove the converse inequality, we argue as follows. Fix $v\in H^1_0(U)$ and consider a sequence $z_k\in H^1_0(U)$ converging to $z:=v-u^U$ weakly in $H^1_0(U)$, so that $v_k:=z_k+u_k^U$ converges weakly to $v$. Then \eqref{eq:identityLiminf} yields
\begin{equation*}
    F'(v,U)+\alpha''\leq\limsup_{k\to\infty}(F_k(v_k,U)+\alpha_k)=\limsup_{k\to\infty}Q_k(z_k,U).
\end{equation*}
Since this last inequality holds for any $z_k\rightharpoonup z$ weakly in $H^1_0(U)$ and $Q_k(\cdot,U)$ $\Gamma$-converges to $Q(\cdot,U)$, we obtain that $F'(v,U)\leq Q(z,U)-\alpha''=F_0(v,U)+\alpha-\alpha''$, where we used \eqref{eq:identityLiminf2}. Together with
\eqref{eq:geqGammaliminf} this implies that
\begin{equation*}
    F'(v,U)=F_0(v,U)+\alpha-\alpha''.
\end{equation*}
Evaluating this last expression at $v=0$ one gets $\nu'(U)=\alpha''-\alpha$, concluding the proof of \eqref{eq:gammaliminf}.
\end{proof}

The following result is an immediate consequence of Proposition \ref{prop:gammalininfgammalimsup}.

\begin{corollary}\label{cor:GammaLimit}The sequence 
$F_k(\cdot,U)$ $\Gamma$-converges in the weak topology of $H^1_0(U)$ if and only if $\nu'(U)=\nu''(U)$, and in that case
the $\Gamma$-limit is given by 
\begin{equation*}
    F(v,U)=F_0(v,U)-\nu'(U)=F_0(v,U)-\nu''(U)\quad\textup{for every }v\in H^1_0(U).
\end{equation*}
\end{corollary}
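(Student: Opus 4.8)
The plan is to read off the statement directly from Proposition \ref{prop:gammalininfgammalimsup}, using only the elementary fact that a sequence of functionals $\Gamma$-converges if and only if its $\Gamma$-$\liminf$ and its $\Gamma$-$\limsup$ coincide as functionals. Since the $F_k(\cdot,U)$ are equi-coercive in the weak topology of $H^1_0(U)$, the functionals $F'(\cdot,U)$ and $F''(\cdot,U)$ are well-defined on $H^1_0(U)$ (and are real-valued, as $\nu'$ and $\nu''$ are bounded), and $F_k(\cdot,U)$ $\Gamma$-converges in that topology precisely when $F'(\cdot,U)=F''(\cdot,U)$.

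Next I would combine parts (a) and (b) of Proposition \ref{prop:gammalininfgammalimsup}: for every $v\in H^1_0(U)$ one has $F'(v,U)-F''(v,U)=\nu''(U)-\nu'(U)$, a constant independent of $v$. Because $g\in H^{-1}(\Omega)$ and the quadratic form associated with $A$ satisfies the boundedness condition \eqref{eq:ellipticitycond}, the quantity $F_0(v,U)=Q(v,U)-\langle g,v\rangle$ is a finite real number for every $v\in H^1_0(U)$; hence it may be cancelled, and the identity $F'(\cdot,U)=F''(\cdot,U)$ holds if and only if $\nu'(U)=\nu''(U)$. This gives the equivalence stated in the corollary.

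Finally, when $\nu'(U)=\nu''(U)$, the common value of $F'(\cdot,U)$ and $F''(\cdot,U)$ is by definition the $\Gamma$-limit $F(\cdot,U)$, and using either formula from Proposition \ref{prop:gammalininfgammalimsup} it equals $F_0(v,U)-\nu'(U)=F_0(v,U)-\nu''(U)$ for every $v\in H^1_0(U)$, as claimed. There is essentially no obstacle in this argument; the only point that deserves an explicit word is the finiteness of $F_0(v,U)$, which is what allows one to pass from equality of the two affinely shifted functionals to equality of the two additive constants $\nu'(U)$ and $\nu''(U)$.
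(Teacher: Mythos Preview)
Your argument is correct and is exactly the approach the paper takes: the paper simply states that the corollary is an immediate consequence of Proposition~\ref{prop:gammalininfgammalimsup}, and your write-up makes explicit the one-line deduction (namely that $F'(\cdot,U)=F''(\cdot,U)$ iff the constant $\nu''(U)-\nu'(U)$ vanishes, since $F_0(v,U)$ is finite for every $v$).
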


The second step of our analysis is to make sure that the collection $\mathcal{U}:=\{U\subset\Omega:\,\,U\textup{ open, }\nu'(U)=\nu''(U)\}$ is rich and to construct the measure $\nu$ appearing in \eqref{eq:limitmain}. To this aim, we fix a countable dense collection $\mathcal{D}$ of open subsets of $\Omega$ (i.e., we assume that for all pairs of open sets $U,\,W$  such that $U\subset\subset W\subset \Omega $  there exists $V\in\mathcal{D}$ such that $U\subset\subset V\subset\subset W$). By a diagonal argument and by the compactness of $\Gamma$-convergence (see \cite[Corollary 8.12]{DalMaso1993}), we can pass to a not relabelled subsequence such that $F_k(\cdot,U)$ $\Gamma$-converges in the weak topology of $H^1_0(U)$ for every $U\in \mathcal{D}$. Therefore from now on, we  suppose that $\mathcal{U}$ itself is dense.

Finally, we denote by $\nu$ the common inner regular envelope of $\nu'$ and $\nu''$, i.e., the increasing set function defined for $U\subset\Omega$ open by
\begin{equation}\label{nu}
 \nu(U)=\sup\{\nu'(V)\!:\!V\,\textup{open,}\, V\!\subset\subset\! U\}=\sup\{\nu''(V)\!:\!V\,\textup{open,}\, V\!\subset\subset\! U\}.\hskip-9pt
\end{equation}
\begin{remark}\label{re:regular}
 Since $\nu'$ and $\nu''$ are increasing and the collection $\mathcal{U}$ is dense, by \cite[Proposition 14.14]{DalMaso1993}  the collection $\mathcal{R}:=\{U\in\mathcal{U}: \nu(U)=\nu'(U)=\nu''(U)\}$ is rich.
\end{remark}
We now prove that $\nu$ can be extended to a Borel measure on $\Omega$.

\begin{proposition}\label{prop:nuisameasure}
The set function $\nu$ is the restriction of a Borel measure $\mu$ defined on $\Omega$.
\end{proposition}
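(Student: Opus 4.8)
The plan is to verify the hypotheses of the De Giorgi--Letta criterion for when an increasing set function extends to a Borel measure (see \cite[Theorem 14.23]{DalMaso1993} or the measure criterion quoted in the same chapter): an inner regular, increasing set function $\nu$ defined on open subsets of $\Omega$ with $\nu(\emptyset)=0$ extends to a Borel measure provided it is \emph{superadditive} (on disjoint open sets), \emph{subadditive}, and \emph{inner regular}. Inner regularity of $\nu$ is built into the definition \eqref{nu}, and $\nu(\emptyset)=0$ is clear since $\nu'(\emptyset)=\nu''(\emptyset)=0$; monotonicity is immediate. So the two substantive points are superadditivity and subadditivity, which I would reduce to corresponding properties of $\nu'$ and $\nu''$ via the inner regular envelope.

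First I would establish superadditivity: if $U_1,U_2\subset\Omega$ are disjoint open sets then $\nu'(U_1\cup U_2)\ge\nu'(U_1)+\nu'(U_2)$, and similarly for $\nu''$. This follows from the definition \eqref{eq:defnuinf}: given admissible recovery sequences $v_k^{(1)}\rightharpoonup 0$ in $H^1_0(U_1)$ and $v_k^{(2)}\rightharpoonup 0$ in $H^1_0(U_2)$ that (nearly) attain $-\nu'(U_i)$, their sum $v_k^{(1)}+v_k^{(2)}$ lies in $H^1_0(U_1\cup U_2)$, converges weakly to $0$, and because $A_k\nabla v_k^{(1)}\cdot\nabla v_k^{(2)}=0$ a.e.\ (disjoint supports) and $\langle f_k,v_k^{(1)}+v_k^{(2)}\rangle=\langle f_k,v_k^{(1)}\rangle+\langle f_k,v_k^{(2)}\rangle$, one gets $F_k(v_k^{(1)}+v_k^{(2)},U_1\cup U_2)=F_k(v_k^{(1)},U_1)+F_k(v_k^{(2)},U_2)$; taking $\liminf$ and using superadditivity of $\liminf$ yields the claim. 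Passing to the inner regular envelope then gives superadditivity of $\nu$ on disjoint open sets: for $V_i\subset\subset U_i$ open, $V_1\cup V_2\subset\subset U_1\cup U_2$ and $\nu(U_1\cup U_2)\ge\nu'(V_1\cup V_2)\ge\nu'(V_1)+\nu'(V_2)$, then take suprema.

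Next, subadditivity: $\nu(U_1\cup U_2)\le\nu(U_1)+\nu(U_2)$ for arbitrary open $U_1,U_2\subset\Omega$. The standard route is to prove it first for $\nu''$ using a cut-off/De Giorgi slicing argument. Given $V\subset\subset U_1\cup U_2$ open, write $V\subset\subset V_1\cup V_2$ with $V_i\subset\subset U_i$ open, pick a near-optimal $v_k\rightharpoonup 0$ in $H^1_0(V)$ for $-\nu''(V)$, and split it as $v_k=\varphi v_k+(1-\varphi)v_k$ using a smooth cutoff $\varphi$ equal to $1$ on $V\setminus V_2$ and supported in $V_1$. Here one needs the classical trick of choosing $\varphi$ among finitely many cutoffs living in disjoint annular layers between $V\setminus(V_1\cup V_2)$-complement regions so that the ``cross terms'' involving $\nabla\varphi$ are controlled on average by $\frac1N\sum_k\|\nabla v_k\|_{L^2}^2$, which is bounded; this makes the error $o(1)+O(1/N)$. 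One then estimates $F_k(v_k,V)\ge F_k(\varphi v_k,V_1)+F_k((1-\varphi)v_k,V_2)-(\text{error})$ and, after taking $\limsup$ and minimizing over admissible sequences, obtains $-\nu''(V)\ge -\nu''(V_1)-\nu''(V_2)-O(1/N)$, i.e.\ $\nu''(V)\le\nu''(V_1)+\nu''(V_2)+O(1/N)$; letting $N\to\infty$ and then $V_i\subset\subset U_i$ increase gives $\nu(U_1\cup U_2)\le\nu(U_1)+\nu(U_2)$ through the envelope. The main obstacle is precisely this slicing estimate for subadditivity: one must handle the quadratic cross terms $A_k\nabla(\varphi v_k)\cdot\nabla((1-\varphi)v_k)$ and the term $\langle f_k,\cdot\rangle$ carefully, exploiting the uniform bound on $\|v_k\|_{H^1_0}$ and the boundedness of $f_k$ in $H^{-1}$, and using a pigeonhole choice among $N$ nested cutoffs to kill the $|\nabla\varphi|^2$ contribution in the limit. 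Once superadditivity, subadditivity, inner regularity and $\nu(\emptyset)=0$ are in hand, the De Giorgi--Letta criterion produces the Borel measure $\mu$ on $\Omega$ with $\mu(U)=\nu(U)$ for every open $U\subset\Omega$, and boundedness of $\mu$ follows from $\mu(\Omega)=\nu(\Omega)\le\nu'(\Omega)<+\infty$.
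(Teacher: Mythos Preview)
Your overall strategy---De Giorgi--Letta plus super/subadditivity passed through the inner regular envelope---is the same as the paper's, and your argument is essentially correct. Two points of comparison are worth noting.

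For superadditivity, your argument for $\nu'$ via ``superadditivity of $\liminf$'' does not quite close: from $\liminf(a_k+b_k)\ge\liminf a_k+\liminf b_k$ you only get a lower bound on $\liminf F_k(v_k^{(1)}+v_k^{(2)},U_1\cup U_2)$, not the upper bound you need. The clean version (which the paper uses) works with $\nu''$ and the \emph{sub}additivity of $\limsup$: taking optimal sequences for $-\nu''(U_i)$ one gets $-\nu''(U_1\cup U_2)\le\limsup(F_k(v_k^{(1)},U_1)+F_k(v_k^{(2)},U_2))\le -\nu''(U_1)-\nu''(U_2)$. Since $\nu$ is the common envelope of $\nu'$ and $\nu''$, this suffices; you even mention ``similarly for $\nu''$'', so this is a minor repair.

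For subadditivity, your $N$-layer De Giorgi slicing with a pigeonhole choice of cutoff is correct but heavier than necessary. The paper uses a \emph{single} cutoff $\varphi$ between $V'\subset\subset V$ and proves the ``weak subadditivity'' $\nu'(V'\cup W)\le\nu'(V)+\nu'(W)$, which is enough for subadditivity of the envelope. The simplification comes from the special feature of this problem: the competitor sequence $z_k$ converges weakly to $0$ in $H^1_0$, hence strongly to $0$ in $L^2$ by Rellich, so the cross terms $\int z_k^2\,A_k\nabla\varphi\cdot\nabla\varphi$ and $\int \varphi z_k\,A_k\nabla z_k\cdot\nabla\varphi$ vanish in the limit with no averaging needed. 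Your slicing argument is the robust one used when the limit function is nonzero (so no free $L^2$ smallness is available); here it works but is overkill.
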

\begin{proof}
The proof is based on the De Giorgi-Letta Theorem 
(see \cite[Theorem 5.6]{DeGiorgiLetta1977} for the original result and \cite[Theorem 14.23]{DalMaso1993} for the particular statement used in this proof). Since $\nu$ is clearly increasing and inner regular, we only need to prove that it is subadditive and superadditive to conclude that it can be extended to a Borel measure on $\Omega$.

To prove that $\nu$ is superadditive it is enough to show that $\nu''$ is superadditive (see \cite[Proposition 14.18]{DalMaso1993}). Let $V,\,W\subset\Omega$ be open sets  with $V\cap W=\emptyset$. Consider two sequences $v_k\in H^1_0(V)$ and  $w_k\in H^1_0(W)$ such that $v_k\rightharpoonup0$ weakly in $H^1_0(V)$, $w_k\rightharpoonup0$ weakly in $H^1_0(W)$, and
\begin{equation*}
    -\nu''(V)=\limsup_{k\to\infty}F_k(v_k,V),\quad  -\nu''(W)=\limsup_{k\to\infty}F_k(w_k,W).
\end{equation*}
Let $z_k\in H^1_0(V\cup W)$ be  equal to $v_k$ on $V$ and  to $w_k$ on $W$. Then
\begin{equation*}
    -\nu''(V\cup W)\leq\limsup_{k\to\infty}F_k(z_k,V\cup W)\leq-\nu''(V)-\nu''(W),
\end{equation*}
which concludes the proof of superadditivity.

Arguing as in the proof of \cite[Proposition 18.4]{DalMaso1993} we see that the subadditivity of $\nu$ follows from the following property: if $V',\,V,\,W\subset\Omega$ are open sets with $V'\subset\subset V\subset\Omega$, then
\begin{equation}\label{eq:weaksubadditivity}
\nu'(V'\cup W)\leq\nu'(V)+\nu'(W).
\end{equation}

To prove this inequality we argue as follows. Fix a  cut off function $\varphi$ between $V'$ and $V$ (i.e., $\varphi\in C^\infty_c(\Omega)$, supp$(\varphi)\subset V$, $\varphi=1$ in a neighbourhood of $\overline{V'}$, and $0\leq\varphi\leq1$ in $\Omega$) and consider a sequence $z_k\in H^1_0(V'\cup W)$ such that $z_k\rightharpoonup 0$ weakly in $H^1_0(V'\cup W)$ and 
\begin{equation*}
    -\nu'(V'\cup W)=\liminf_{k\to\infty}F_k(z_k,V'\cup W).
\end{equation*}
We define $v_k=\varphi z_k$ and $w_k=(1-\varphi)z_k$, and we observe that $v_k\in H^1_0(V)$, $w_k\in H^1_0(W)$, that $v_k\rightharpoonup 0$ weakly in $H^1_0(V)$ and  $w_k\rightharpoonup 0$ weakly in $H^1_0(W)$. Therefore
\begin{multline}
    -\nu'(V)-\nu'(W)\leq\liminf_{k\to\infty}F_k(v_k,V)+\liminf_{k\to\infty}F_k(w_k,W)\leq\liminf_{k\to\infty}(F_k(v_k,V)\\\label{eq:sommaregolare}+F_k(w_k,W))
    =\liminf_{k\to\infty}(Q_k(v_k,V)+Q_k(w_k,W)-\langle f_k,z_k\rangle).
\end{multline}

We note that, since $\varphi^2\leq \varphi$, we have
\begin{equation*}
 Q_k(v_k,V)\leq\frac{1}{2}\int_V\!\!\varphi A_k\nabla{z_k}\cdot\nabla{z_k}\,dx+\frac{1}{2}\int_V\!\! z_k^2A_k\nabla{\varphi}\cdot\nabla{\varphi}\,dx+\int_V\!\!\varphi z_kA_k\nabla{z_k}\cdot\nabla{\varphi}\,dx,
\end{equation*}
and a similar inequality holds for $Q_k(w_k,W)$, with $V$ and $\varphi$ replaced by $W$ and $1-\varphi$. Since the sequence $z_k$ converges to zero strongly in $L^2(V'\cup W)$ by the Rellich Compactness Theorem, we have
\begin{equation*}
    Q_k(v_k,V)+Q_k(w_k,W)\leq \frac{1}{2}\int_{V'\cup W}A_k\nabla{z_k}\cdot\nabla{z_k}\,dx+\varepsilon_k,
\end{equation*}
with $\varepsilon_k\rightarrow0$ as $k\to\infty$. Therefore \eqref{eq:sommaregolare} implies
\begin{equation*}
   -\nu'(V)-\nu'(W) \leq \liminf_{k\to\infty}F_k(z_k,V'\cup W)=-\nu'(V'\cup W).
\end{equation*}
This proves \eqref{eq:weaksubadditivity} and concludes the proof of the theorem.
\end{proof}

\begin{proof}[Proof of Theorem \ref{thm:maintheorem}]
The result follows from  Corollary \ref{cor:GammaLimit}, Remark \ref{re:regular}, and Proposition \ref{prop:nuisameasure}.
\end{proof}

 The following proposition shows that there may exist an open set $U_0\subset \Omega$ such that $\nu'$ and $\nu''$ are not inner regular at $U_0$, i.e., $\nu(U_0)<\nu''(U_0)\leq\nu' (U_0)$. As a consequence of this,   Corollary \ref{cor:GammaLimit} implies that \eqref{eq:limitmain} cannot hold for $U=U_0$. As usual, we denote by $\delta_{x_0}$ the Borel measure corresponding to the unit mass concentrated at $x_0$.

\begin{proposition}\label{controesempio}
Assume $n\geq 2$ and $A_k=A=I$, where $I$ is the identity matrix. Let $U_0\subset\subset\Omega$ be open and $x_0\in\partial U_0$. Then there exists a sequence $f_k\in L^\infty(\Omega)$ converging to zero weakly in $H^{-1} (\Omega)$ such that, if $\nu'$ and $\nu''$ are the set functions defined by \eqref{eq:defnuinf} and \eqref{eq:defnusup}, then
\begin{enumerate}
    \item[(a)] $\nu'(U)=\nu''(U)=0$ for every open set $U\subset\Omega$ such that $x_0\notin\overline U$;
    \item[(b)] $\nu'(U)=\nu''(U)=1$ for every open set $U\subset\Omega$ such that $x_0\in U$;
    \item[(c)] $\nu'(U_0)=\nu''(U_0)=1$.
\end{enumerate}
Conditions (a) and (b) imply that the collection $\mathcal{U}:=\{U\subset\Omega:\,\,U\textup{ open, }\nu'(U)=\nu''(U)\}$ is rich, hence $\nu$ is well defined by  \eqref{nu}. Moreover,
\begin{enumerate}
    \item[(d)] $\nu=\delta_{x_0}$.
\end{enumerate}
Finally, (a) and (c) give $\nu(U_0)=0<1=\nu''(U_0)=\nu'(U_0)$.
\end{proposition}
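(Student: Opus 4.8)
The plan is to take $f_k$ concentrating at $x_0$, the one genuinely new ingredient being that the support of $f_k$ should shrink to $x_0$ \emph{from inside} $U_0$. Concretely, I would fix a non-zero $\phi\in C^\infty_c(B_1(0))$, choose $r_k\to 0^+$ with $\overline{B_{r_k}(x_0)}\subset\Omega$, note that $D_k:=B_{r_k}(x_0)\cap U_0$ is open and non-empty (because $x_0\in\partial U_0$), pick a closed ball $\overline{B_{\rho_k}(y_k)}\subset D_k$, and let $u_k$ be the rescaled bump $u_k(x)=c_k\,\phi\bigl((x-y_k)/\rho_k\bigr)$, with $c_k>0$ chosen so that $\int_\Omega|\nabla u_k|^2\,dx=2$. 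Then I would set $f_k:=-\Delta u_k$. By construction $u_k\in C^\infty_c(D_k)$, hence $f_k\in C^\infty_c(\Omega)\subset L^\infty(\Omega)$ and $\mathrm{supp}\,f_k\subset\mathrm{supp}\,u_k\subset D_k\subset B_{r_k}(x_0)\cap U_0$.

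Two preliminaries come first. Since $u_k\in H^1_0(\Omega)$ is the solution of $-\Delta u_k=f_k$ in $\Omega$, one has $\|f_k\|_{H^{-1}(\Omega)}=\|u_k\|_{H^1_0(\Omega)}=\sqrt 2$, while for every $\varphi\in H^1_0(\Omega)$
\[
|\langle f_k,\varphi\rangle|=\Bigl|\int_\Omega\nabla u_k\cdot\nabla\varphi\,dx\Bigr|\le\sqrt 2\,\Bigl(\int_{B_{r_k}(x_0)}|\nabla\varphi|^2\,dx\Bigr)^{1/2}\longrightarrow 0
\]
since $|B_{r_k}(x_0)|\to 0$; therefore $f_k\rightharpoonup 0$ in $H^{-1}(\Omega)$ (but $\|f_k\|_{H^{-1}(\Omega)}$ does \emph{not} tend to $0$ — this is precisely why $\nu$ can be non-trivial). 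Moreover, as $A_k=A=I$, Remark~\ref{re:expressionforg} gives $g=0$, so $u^U=0$ for every open $U\subset\Omega$, and the computation in the proof of Proposition~\ref{prop:gammalininfgammalimsup} (with $\alpha=Q(u^U,U)=0$) reduces everything to the energies of the solutions:
\[
\nu'(U)=\limsup_{k\to\infty}\tfrac12\!\int_U|\nabla u^U_k|^2\,dx,\qquad\nu''(U)=\liminf_{k\to\infty}\tfrac12\!\int_U|\nabla u^U_k|^2\,dx,
\]
where $u^U_k\in H^1_0(U)$ solves $-\Delta u^U_k=f_k$ in $U$.

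Then (a), (b), (c) would all follow from the single observation that $\mathrm{supp}\,u_k$ is a compact subset of $B_{r_k}(x_0)\cap U_0$. For (c): $u_k\in C^\infty_c(U_0)\subset H^1_0(U_0)$ solves $-\Delta u_k=f_k$ in $U_0$, so uniqueness forces $u^{U_0}_k=u_k$, whence $\tfrac12\int_{U_0}|\nabla u^{U_0}_k|^2\,dx=1$ and $\nu'(U_0)=\nu''(U_0)=1$. For (b): if $x_0\in U$ then $B_{r_k}(x_0)\subset U$ for $k$ large, so the same argument gives $u^U_k=u_k$ and $\tfrac12\int_U|\nabla u^U_k|^2\,dx=1$ for all such $k$, i.e. $\nu'(U)=\nu''(U)=1$. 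For (a): if $x_0\notin\overline U$ then $B_{r_k}(x_0)\cap U=\emptyset$ for $k$ large, so $f_k\equiv 0$ on $U$, hence $u^U_k=0$ and $\nu'(U)=\nu''(U)=0$.

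It remains to handle the measure. By (a)--(b), an open $U$ can fail to belong to $\mathcal U$ only if $x_0\in\partial U$, and in any increasing family $(U_t)_{t\in(0,1)}$ with $U_{t_1}\subset\subset U_{t_2}$ for $t_1<t_2$ this occurs for at most one $t$ (if $x_0\in\partial U_s$ and $s<t$, then $x_0\in\overline{U_s}\subset U_t$, so $x_0\in U_t$); hence $\mathcal U$ is rich, thus dense, so the two inner regular envelopes in \eqref{nu} coincide and $\nu$ is well defined, and by Proposition~\ref{prop:nuisameasure} it extends to a Borel measure. On open sets $\nu$ is then computed at once: if $x_0\in U$, picking a ball $B_\rho(x_0)\subset\subset U$ gives $1=\nu'(B_\rho(x_0))\le\nu(U)\le\nu'(\Omega)=1$; if $x_0\notin U$, every $V\subset\subset U$ satisfies $x_0\notin\overline V$, so $\nu'(V)=0$ and $\nu(U)=0$. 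Thus $\nu$ and $\delta_{x_0}$ agree on all open subsets of $\Omega$, hence coincide as Borel measures, which is (d); and since $x_0\in\partial U_0$ forces $x_0\notin U_0$, we get $\nu(U_0)=0<1=\nu''(U_0)=\nu'(U_0)$. The only real difficulty is guessing the placement of $\mathrm{supp}\,f_k$: once it is localised inside $B_{r_k}(x_0)\cap U_0$, the coincidences $u^{U_0}_k=u_k$ and (for $U\ni x_0$) $u^U_k=u_k$ reduce everything else to bookkeeping.
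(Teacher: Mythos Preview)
Your proof is correct and noticeably simpler than the paper's. The paper takes $f_k$ to be normalised characteristic functions $c_k\chi_{B(x_k,r_k)}$ of shrinking balls with $x_k\to x_0$ from inside $U_0$, and then has to work hard (its Step~3) to show that the $H^{-1}$ energy does not leak to larger scales: it constructs auxiliary radii $R_k\to 0$ with $\|f_k\|^2_{H^{-1}(B(x_k,R_k))}\to 2$ via a cut-off of the solutions $u_{g_k}$ and a carefully tuned choice \eqref{eq:defRk} of $R_k$. Your device of setting $f_k=-\Delta u_k$ for a compactly supported bump $u_k$ bypasses all of this: because $u_k\in H^1_0(U)$ whenever $\mathrm{supp}\,u_k\subset U$, uniqueness forces $u_k^U=u_k$ \emph{exactly}, so the energy is identically $1$ for every relevant $U$ and no limiting argument is needed for (b) or (c). As a side benefit, your proof of $f_k\rightharpoonup 0$ in $H^{-1}(\Omega)$ (via $\int_{B_{r_k}(x_0)}|\nabla\varphi|^2\to 0$ by absolute continuity) does not invoke the hypothesis $n\ge 2$, whereas the paper's Step~1 relies on points having zero $H^1$-capacity; in fact your entire construction goes through unchanged for $n=1$. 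The paper's more explicit data $x_k,r_k,R_k$ are reused in Corollary~\ref{controesempio2}, but your centres $y_k$ and radii $\rho_k$ would serve the same purpose there, since $\overline{B_{\rho_k}(y_k)}\subset U_0$, $\rho_k\to 0$, and $y_k\to x_0$.
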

To prove Proposition \ref{controesempio} we use the following well known result.
\begin{lemma}\label{lemma:H-1}
Let $U$ be a bounded open subset of $\Rm$, let $f\in H^{-1}(U)$, and let $u_f$ be the unique solution of the problem
\begin{equation}\label{eq:uf}
\begin{cases}
-\Delta u_f=f\quad\text{ in } U,\\
 u_f\in H^1_0(U).
\end{cases}
\end{equation}
Then $\|f\|_{H^{-1}(U)}=\|u_f\|_{H^1_0(U)}$ and
\begin{equation}\label{eq:minuf}
\min_{u\in H^1_0(U)}\big(\tfrac12\|u\|^2_{H^1_0(U)}-\langle f,u\rangle\big)=\tfrac12\|u_f\|^2_{H^1_0(U)}-\langle f,u_f\rangle=-\tfrac12\|f\|^2_{H^{-1}(U)}.
\end{equation}
\end{lemma}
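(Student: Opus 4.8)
The plan is to exploit the variational characterisation of $u_f$ as the minimiser of the Dirichlet-type functional on the left-hand side of \eqref{eq:minuf} together with the representation of the $H^{-1}$-norm as a supremum of duality pairings. First I would recall that $u_f$ solves \eqref{eq:uf} in the weak sense, i.e.
\[
\int_U \nabla u_f\cdot\nabla v\,dx=\langle f,v\rangle\qquad\text{for every }v\in H^1_0(U),
\]
and in particular, testing with $v=u_f$, that $\|u_f\|_{H^1_0(U)}^2=\langle f,u_f\rangle$.

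For the norm identity, I would write $\|f\|_{H^{-1}(U)}=\sup\{\langle f,v\rangle:\,v\in H^1_0(U),\ \|v\|_{H^1_0(U)}\le 1\}$ and use the weak formulation together with the Cauchy--Schwarz inequality in $[L^2(U)]^n$ to estimate $\langle f,v\rangle=\int_U\nabla u_f\cdot\nabla v\,dx\le\|u_f\|_{H^1_0(U)}\|v\|_{H^1_0(U)}$, which gives $\|f\|_{H^{-1}(U)}\le\|u_f\|_{H^1_0(U)}$; the reverse inequality follows by testing with $v=u_f/\|u_f\|_{H^1_0(U)}$ when $u_f\ne 0$ (and both quantities vanish when $u_f=0$).

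For the minimum, I would complete the square: using the weak formulation to rewrite $\langle f,u\rangle=\int_U\nabla u_f\cdot\nabla u\,dx$ for every $u\in H^1_0(U)$, one checks that
\[
\tfrac12\|u\|^2_{H^1_0(U)}-\langle f,u\rangle=\tfrac12\|u-u_f\|^2_{H^1_0(U)}-\tfrac12\|u_f\|^2_{H^1_0(U)},
\]
so the left-hand side is minimised exactly at $u=u_f$, with minimum value $-\tfrac12\|u_f\|^2_{H^1_0(U)}$; combined with the norm identity and with $\langle f,u_f\rangle=\|u_f\|^2_{H^1_0(U)}$, this yields all the equalities in \eqref{eq:minuf}.

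I do not expect any genuine obstacle: the statement is classical and the only point requiring a word of care is the degenerate case $f=0$ (equivalently $u_f=0$), in which the supremum defining $\|f\|_{H^{-1}(U)}$ is $0$ and there is nothing to divide by. Alternatively, instead of completing the square one can invoke the strict convexity and coercivity of $u\mapsto\tfrac12\|u\|^2_{H^1_0(U)}-\langle f,u\rangle$ to obtain existence and uniqueness of its minimiser, whose Euler--Lagrange equation is precisely \eqref{eq:uf}, forcing it to coincide with $u_f$.
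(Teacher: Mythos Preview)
Your proof is correct and follows the same line as the paper's, just spelled out in more detail: the paper simply notes that \eqref{eq:uf} is the Euler--Lagrange equation of the minimum problem (your completing-the-square computation makes this explicit) and that the norm identity is an immediate consequence of the weak formulation (your Cauchy--Schwarz/test-with-$u_f$ argument is exactly this).
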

\begin{proof}
Since \eqref{eq:uf} is the Euler-Lagrange equation of the minimum problem in \eqref{eq:minuf}, the first equality is obvious. The second equality in \eqref{eq:minuf} follows from the fact that $\|f\|_{H^{-1}(U)}=\|u_f\|_{H^1_0(U)}$, which is an immedate consequence of the weak formulation of~\eqref{eq:uf}.
\end{proof}
\begin{proof}[Proof of Proposition \ref{controesempio}]
For every $x\in\mathbb{R}^n$ and $r>0$ the open ball with center $x$ and radius $r$ is denoted by $B(x,r)$. 

{\bf Step 1.} We consider a sequence $f_k\in L^\infty(\Omega)$, bounded in $H^{-1}(\Omega)$, such that for every $\varepsilon>0$ there exists $k_\varepsilon>0$ satisfying
\begin{equation}\label{eq:concentration}
\mathrm{supp}f_k\subset B(x_0,\varepsilon) \text{ for every }k\ge k_\varepsilon.
\end{equation}
Passing to a subsequence, not relabelled, we may assume $f_k$ converges weakly in $H^{-1}(\Omega)$ to some $f_0$.
If $\varphi\in C^\infty_c(\Omega)$ vanishes in a neighbourhood of $x_0$, by \eqref{eq:concentration}  we have that $\langle f_k,\varphi\rangle=0 $ for $k$ large enough, hence   $\langle f_0,\varphi\rangle=0$. On the other hand, since $n\ge 2$, every function in $H^1_0(\Omega)$ can be approximated strongly in $H^1_0(\Omega)$ by a sequence of functions in $C^\infty_c(\Omega)$ vanishing near $x_0$. From the previous remark we conclude that $\langle f_0,v\rangle=0$ for every $v\in H^1_0(\Omega)$, hence $f_0=0$. Since this result does not depend on the subsequence, we conclude that 
\begin{equation}\label{eq:fktozeroweakly}
\text{the whole sequence }f_k\text{ converges to }0\text{ weakly in }H^{-1}(\Omega).
\end{equation}

Property (a) follows immediately from \eqref{eq:concentration} and from the definition of $\nu'$ and $\nu''$.

We claim that if $U$ and $V$ are open subsets of $\Omega$, with $x_0\in U\subset V$, then
\begin{equation}
\label{eq:nuprimoequal1}
\nu'(U)=\nu'(V)\quad\text{and}\quad
\nu''(U)=\nu''(V),
\end{equation}
We prove only the first equality, the proof of the other one being analogous. Since $\nu'(U)\leq\nu'(V)$, it remains to prove that $-\nu'(U)\leq-\nu'(V)$. Let $v_k$ be a sequence converging to $0$ weakly in $H^1_0(V)$ such that 
\begin{equation}\label{eq:recoveryV}
    -\nu'(V)=\liminf_{k\to\infty}F_k(v_k,V).
\end{equation}
Consider a function $\varphi\in C^\infty_c(V)$, with $\mathrm{supp}(\varphi)\subset U$, $\varphi=1$ in a neighbourhood of $x_0$, and $0\leq \varphi\leq 1$ in $U$. Then the sequence $u_k:=\varphi v_k$ belongs to $H^1_0(U)$, $u_k$ converges to $0$ weakly in $H^1_0(U)$, and $u_k=v_k$ in a neighbourhood of $x_0$, independent of $k$. Hence
\begin{eqnarray}\label{eq:qualcosa}
    -\nu'(U)\leq \liminf_{k \to \infty}F_k(u_k,U).
\end{eqnarray}
Since, by \eqref{eq:concentration}, $F_k(u_k,U)$ is equal to
\begin{eqnarray}
   \nonumber
    \frac{1}{2}\int_{V}\varphi^2|\nabla v_k|^2\,dx+\int_V\varphi v_k\nabla\varphi\cdot\nabla v_k\, dx+\frac{1}2\int_Vv_k^2|\nabla\varphi|^2\, dx -\int_Vf_kv_k\,dx\,
\end{eqnarray}
for $k$ large enough and  $v_k$ tends to $0$ strongly in $L^2(V)$ by the Rellich Compactness Theorem, while $\nabla v_k$ is bounded in $L^2(V)$, from \eqref{eq:recoveryV} and \eqref{eq:qualcosa} we obtain $-\nu'(U)\leq-\nu'(V)$, concluding the proof of \eqref{eq:nuprimoequal1}.

If $U$ and $V$ are open subsets of $\Omega$, with $x_0\in U$ and $x_0\in V$, then
\begin{equation}
\label{eq:nuprimoequal11}
\nu'(U)=\nu'(V)\quad\text{and}\quad
\nu''(U)=\nu''(V),
\end{equation}
Indeed, by \eqref{eq:nuprimoequal1} we have $\nu'(U\cap V)=\nu'(U)$, $\nu'(U\cap V)=\nu'(V)$, $\nu''(U\cap V)=\nu''(U)$, and  $\nu'(U\cap V)=\nu'(V)$, which give \eqref{eq:nuprimoequal11}

\medskip
{\bf Step 2.} Let us fix $R>0$ such that $B(x_0,2R)\subset\subset\Omega$. In addition to the assumptions of Step 1, suppose that
 \begin{equation}\label{eq:norm fk}
    \lim_{k\to\infty}\|f_k\|^2_{H^{-1}(B(x_0,R))}=2.
\end{equation}
We claim that for every open set $U\subset\Omega$, with $x_0\in U$, we have
\begin{equation}\label{eq:nu=1inU}
   \nu'(U)=\nu''(U)=1.
\end{equation}
By \eqref{eq:nuprimoequal11} it is enough to prove the equalities when $U=B(x_0,R)$. In this case, recalling that $A_k=I$,  by \eqref{eq:defnuinf} we have
\begin{equation}\label{eq:nuprime}
-\nu'(U)=\inf_{\substack{v_k\in H^1_0(U)\\v_k\rightharpoonup 0}}\liminf_{\substack{k\to\infty}} \Big(\frac12\int_U|\nabla v_k|^2dx-\int_Uf_kv_kdx\Big).
\end{equation}
By \eqref{eq:minuf} and \eqref{eq:norm fk} this implies that
\begin{equation}\label{eq:-nuprimegeq}
-\nu'(U)\geq \liminf_{k\to\infty}\Big( -\frac12\|f_k\|^2_{H^{-1}(U)}\Big)=-1.
\end{equation}

On the other hand, by \eqref{eq:fktozeroweakly} the sequence $u_{f_k}$ of the solutions to \eqref{eq:uf} for $f_k$ converge to $0$ weakly in $H^1_0(U)$. Taking $v_k=u_{f_k}$ in \eqref{eq:nuprime}, and using \eqref{eq:minuf} and \eqref{eq:norm fk} again, we obtain
\begin{eqnarray*}
&\displaystyle-\nu'(U)\leq \liminf_{k\to\infty}\Big(\frac12\int_U|\nabla u_{f_k}|^2dx-\int_Uf_ku_{f_k}dx\Big).
\\
&\displaystyle=\liminf_{k\to\infty}\Big( -\frac12\|f_k\|^2_{H^{-1}(U)}\Big)=-1.
\end{eqnarray*}
Together with \eqref{eq:-nuprimegeq} this gives  $\nu'(U)=1$. The same arguments yield $\nu''(U)=1$, concluding the proof of \eqref{eq:nu=1inU}, which gives (b) in the statement.

Equalities (a) and (b), together with Definition \ref{def:rich}, imply that  the collection $\mathcal{U}:=\{U\subset\Omega:\,\,U\textup{ open, }\nu'(U)=\nu''(U)\}$ is rich, hence $\nu$ is well defined by  \eqref{nu}. Moreover (a), (b), and  \eqref{nu} yield $\nu=\delta_{x_0}$, concluding the proof of (d).

\medskip
{\bf Step 3.} In order to obtain also property (c) of the statement, we now construct a particular sequence $f_k$ in $L^\infty(\Omega)$, bounded in $H^{-1}(\Omega)$, and  satisfying \eqref{eq:concentration} and \eqref{eq:norm fk}. 
We begin by introducing  an auxiliary sequence of functions $g_k$ defined on $\Rm$ and  supported on balls centered at $0$. The sequence $f_k$ will be then obtained as a suitable translation of these functions.

We fix $R>0$ as in Step 2 and a sequence $0<r_k<R$  converging to $0$. The sequence $g_k$ is defined by $g_k=c_k\chi_{B(0,r_k)}$, where $\chi_{B(0,r_k)}$ is the characteristic function of $B(0,r_k)$ and $c_k=\sqrt2/\lvert\rvert \chi_{B(0,r_k)}\lvert\rvert_{H^{-1}(B(0,2R))}$, so that $\|g_k\|^2_{H^{-1}(B(0,2R))}=2$. The argument used at the beginning of Step 1 shows that $g_k$ converges to $0$ weakly in $H^{-1}(B(0,2R))$.

We claim that there exists a sequence $R_k$ converging to $0$ such that $0<r_k<R_k<R$ and 
\begin{equation}\label{eq:new radius}
\|g_k\|^2_{H^{-1}(B(0,R_k))}\to 2.
\end{equation}

Since $g_k$ converges to $0$ weakly in $H^{-1}(B(0,2R))$, the sequence $u_{g_k}$ of the solutions to \eqref{eq:uf} with $U=B(0,2R)$ and $f=g_k$ converge to $0$ weakly in $H^1_0(B(0,2R))$ and strongly in  $L^2(B(0,2R))$ by the Rellich Compactness Theorem.

Let us define 
\begin{equation}\label{eq:defRk}
    R_k:=r_k+\frac{1}{k}+\Big(\int_{B(0,2R)}u_{g_k}^2\, dx\Big)^{1/4}.
\end{equation}
Since $r_k\to 0$ and $u_{g_k}\to 0$ strongly in $L^2(B(0,2R))$, we have that $R_k\to 0$ and $r_k<R_k<R$ for $k$ large enough. Moreover, it follows from \eqref{eq:defRk} that 
\begin{equation}\label{eq:stimapezzetto}
\lim_{k\to\infty}\frac{1}{R_k-r_k}\Big(\int_{B(0,2R)}u_{g_k}^2\,dx\Big)^{1/2}=0.   
\end{equation}

 Consider a sequence $\varphi_k\in C^\infty_c(B(0,2R))$ with $\text{supp}(\varphi_k)\subset B(0,R_k)$, $\varphi_k=1$ on $B(0,r_k)$, $0\leq\varphi_k\leq 1$ on $B(0,2R)$, and  such that $\lvert \nabla{\varphi_k}\rvert\leq2/(R_k-r_k)$. Let $w_k:=\varphi_k u_{g_k}$. Since $w_k\in H^1_0(B(0,R_k))$, by \eqref{eq:minuf} we have
 \begin{equation}\label{eq:leqnormaRk}
     -\tfrac{1}{2}
\|g_k\|^2_{H^{-1}(B(0,R_k))}\leq\tfrac{1}{2}\|w_k\|^2_{H^1_0(B(0,R_k))}-\langle g_k,w_k\rangle. \end{equation}
By a direct computation, setting $U=B(0,2R)$ and $s_k=R_k-r_k$, we obtain
\begin{eqnarray*}
    &\displaystyle\int_{ B(0,R_k)}\lvert\nabla{w_k}\rvert^2\,dx=\int_{U}(\vert\nabla{u_{g_k}}\rvert^2\varphi_k^2+\lvert\nabla{\varphi_k}\rvert^2u_{g_k}^2+2u_{g_k}\varphi_k\nabla{\varphi_k}\cdot\nabla{u_{g_k}})\,dx\\
&\displaystyle\leq\int_{U}\vert\nabla{u_{g_k}}\rvert^2\,dx+\frac{4}{s_k^2}\int_Uu_{g_k}^2\,dx+\frac{4}{s_k}\Big(\int_{U}u_{g_k}^2\,dx\Big)^{1/2}\Big(\int_U\vert\nabla{u_{g_k}}\rvert^2 dx\Big)^{1/2}\\
 &\displaystyle\leq\int_U\vert\nabla{u_{g_k}}\rvert^2\, dx+\frac{4}{s_k}\Big(\int_Uu_{g_k}^2\,dx\Big)^{1/2}\Big(\frac{1}{s_k}\Big(\int_Uu_{g_k}^2\,dx\Big)^{1/2}+M\Big),
\end{eqnarray*}
where $M>0$ is such that $\|\nabla u_{g_k}\|_{L^2(U)}\leq M$. This shows that $w_k$ is bounded in $H^1_0(U)$. Since $u_{g_k}\to 0$ strongly in $L^2(U)$, the sequence $w_k$ converges to $0$ strongly in $L^2(U)$, which together with the boundedness in $H^1_0(U)$, implies that 
\begin{equation}\label{eq:wk converges}
w_k \text{ converges to }0 \text{ weakly in } H^1_0(U).
\end{equation}

Recalling that $w_k=u_{g_k}$ on $\text{supp}(g_k)$, the previous estimate for $|\nabla w_k|^2$, together with   \eqref{eq:stimapezzetto} and \eqref{eq:leqnormaRk}, implies that
\begin{eqnarray}
\label{eq:uguale-1}
   \nonumber &\displaystyle \limsup_{k\to\infty}\big( -\tfrac{1}{2}
\|g_k\|^2_{H^{-1}(B(0,R_k))}\big)\leq\lim_{k\to\infty}\big(\tfrac{1}{2}\|u_{g_k}\|^2_{H^1_0(U)}-\langle g_k,u_{g_k}\rangle\big)\\&\displaystyle=
\lim_{k\to\infty} \tfrac{1}{2}\|g_k\|^2_{H^{-1}(U)}=-1,
\end{eqnarray}
where in the first  equality we used \eqref{eq:minuf} and in the second one we used the equality $\|g_k\|^2_{H^{-1}(U)}=2$, which holds by construction.
Since 
$\|g_k\|^2_{H^{-1}(B(0,R_k))}\leq \|g_k\|^2_{H^{-1}(U)}=2$, we obtain that $R
_k$ satisfies \eqref{eq:new radius}.

We now fix a sequence $x_k\in U_0$ converging to $x_0$ such that $B(x_k,R_k)\subset\subset U_0$ for  $k\in\mathbb{N}$ large enough, and  we set $f_k(x):=g_k(x-x_k)$. We observe that $f_k$ satisfies the concentration property \eqref{eq:concentration} and is bounded in $H^{-1}(\Omega)$. By the equality $\|g_k\|^2_{H^{-1}(B(0,2R))} =2$ and by \eqref{eq:new radius} we also have 
\begin{equation}\label{eq:new norm fk}
\lim_{k\to\infty}\|f_k\|^2_{H^{-1}(B(x_k,R_k))}=\lim_{k\to\infty}\|f_k\|^2_{H^{-1}(B(x_k,2R))} =2.
\end{equation}
Since $B(x_k,R_k)\subset B(x_0,R)\subset B(x_k,2R)\subset \Omega$
for $k$ large enough, we have $\|f_k\|^2_{H^{-1}(B(x_k,R_k))}\le \|f_k\|^2_{H^{-1}(B(x_0,R))}\le \|f_k\|^2_{H^{-1}(B(x_k,2R))}$. Consequently, 
\eqref{eq:new norm fk} implies \eqref{eq:norm fk}. Therefore we can apply to this sequence $f_k$ all results obtained in Steps 1 and 2.

\medskip
\textbf{Step 4.}
We now prove {(c)} for the sequence $f_k$ introduced in the previous step. Since $\nu'(\Omega)=\nu''(\Omega)=1$ thanks to {(b)}, recalling the monotonicity of $\nu' $ and $\nu''$, and the inequality $\nu''\leq \nu'$, it is enough to show that \begin{equation}\label{eq:nu gec 1}
    \nu''(U_0)\geq 1.
\end{equation}

Let $w_k$ be the sequence introduced in Step 3 and let $z_k(x)=w_k(x-x_k)$. Since $w_k$ converge to $0$ weakly in $H^1_0(B(0,2R))$ by \eqref{eq:wk converges} and $z_k\in H^1_0(B(x_k,R_k))\subset H^1_0(U_0)$, we have that $z_k$ converges to $0$ weakly in $H^1_0(U_0)$. By \eqref{eq:defnusup}, we have 
\begin{eqnarray*}
    &\displaystyle -\nu''(U_0)\leq\limsup_{k\to \infty} \Big(\frac{1}{2}\int_{U_0}|\nabla z_k|^2\,dx -\int_{U_0} f_kz_k\,dx\Big)\\
    &\displaystyle =\limsup_{k\to \infty} \Big(\frac{1}{2}\int_{B(0,2R)}|\nabla w_k|^2\,dx -\int_{B(0,2R)} g_kw_k\,dx\Big)\leq - 1,
\end{eqnarray*}
where the last inequality can be obtained arguing as in the proof of \eqref{eq:uguale-1}. This proves \eqref{eq:nu gec 1}, thus concluding the proof of the proposition.
\end{proof}

\begin{corollary}\label{cor:gammaconvergence esempio}
    Under the hypotheses of Proposition \ref{controesempio}, the functional $F_0$ introduced in \eqref{eq:functionalF0} is given by  
    \begin{equation*}
        F_0(u,U):= \frac{1}{2}\int_U|\nabla u|^2\
        \,dx,
    \end{equation*}
for every open set $U\subset \Omega$ and every $u\in H^1_0(U)$ (and we do not need to pass to a subsequence).

Moreover, for every open set $U\subset \Omega$ with $x_0\notin \partial U$ the sequence $F_k(\cdot,U)$ $\Gamma$-converges in the weak topology of $H^1_0(U)$ to the functional $F(\cdot,U):=F_0(\cdot,U)-\delta_{x_0}(U)$.
Finally, $F_k(\cdot,U_0)$ $\Gamma$-converges in the weak topology of $H^1_0(U_0)$ to the functional $F(\cdot,U_0)=F_0(\cdot,U)-1$.
\end{corollary}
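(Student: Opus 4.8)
The three assertions will follow directly by combining Proposition~\ref{controesempio}, Remark~\ref{re:expressionforg} and Corollary~\ref{cor:GammaLimit}, and the plan is to extract them in this order. First I would identify the term $g$: since $A_k=A=I$ for every $k$, the matrices $A_k$ trivially converge to $A$ pointwise a.e.\ in $\Omega$, and by \eqref{eq:fktozeroweakly} the \emph{whole} sequence $f_k$ converges to $0$ weakly in $H^{-1}(\Omega)$. Remark~\ref{re:expressionforg} then forces $g=0$, with no need to pass to a subsequence (the $G$-convergence subsequence in Proposition~\ref{prop:Tartarstorica} is also irrelevant here, as the identity matrix $G$-converges to itself). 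Substituting $A=I$ and $g=0$ into \eqref{eq:functionalF0} gives $F_0(u,U)=\tfrac12\int_U|\nabla u|^2\,dx$ for every open $U\subset\Omega$ and every $u\in H^1_0(U)$, which is the first claim.

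Next I would fix an open set $U\subset\Omega$ with $x_0\notin\partial U$ and split into two cases according to whether $x_0\in U$ or $x_0\notin\overline U$ (these exhaust all possibilities, since $U$ is open). If $x_0\in U$, then $\delta_{x_0}(U)=1$ and part (b) of Proposition~\ref{controesempio} gives $\nu'(U)=\nu''(U)=1$; if $x_0\notin\overline U$, then $\delta_{x_0}(U)=0$ and part (a) gives $\nu'(U)=\nu''(U)=0$. In both cases $\nu'(U)=\nu''(U)=\delta_{x_0}(U)$, so Corollary~\ref{cor:GammaLimit} applies and yields that $F_k(\cdot,U)$ $\Gamma$-converges in the weak topology of $H^1_0(U)$ to $F_0(\cdot,U)-\delta_{x_0}(U)=F(\cdot,U)$. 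Finally, for $U=U_0$, part (c) of Proposition~\ref{controesempio} gives $\nu'(U_0)=\nu''(U_0)=1$, and Corollary~\ref{cor:GammaLimit} then gives the $\Gamma$-convergence of $F_k(\cdot,U_0)$ to $F_0(\cdot,U_0)-1=F(\cdot,U_0)$ in the weak topology of $H^1_0(U_0)$.

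I do not expect any substantial obstacle here: all the analytic content has already been established in Proposition~\ref{controesempio} and in the structural Corollary~\ref{cor:GammaLimit}. The only points deserving a little care are the remark that no subsequence extraction is needed — which hinges on identifying $g$ as the weak limit $0$ of $f_k$ through the corrector formula of Remark~\ref{re:expressionforg}, rather than through the abstract compactness of Proposition~\ref{prop:Tartarstorica} — and the elementary dichotomy $x_0\in U$ versus $x_0\notin\overline U$ used to rewrite $\nu'(U)=\nu''(U)$ as $\delta_{x_0}(U)$ for sets $U$ with $x_0\notin\partial U$.
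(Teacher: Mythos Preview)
Your proposal is correct and follows exactly the same route as the paper's proof: invoke Remark~\ref{re:expressionforg} (pointwise convergence of $A_k$ and weak convergence of $f_k$ to $0$) to get $g=0$, and then read off the $\Gamma$-limits from Corollary~\ref{cor:GammaLimit} together with parts (a), (b), (c) of Proposition~\ref{controesempio}. The paper's proof is terser but identical in content; your case split $x_0\in U$ versus $x_0\notin\overline U$ is the natural unpacking of ``immediate consequences''.
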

\begin{proof}
    The first statement follows from Remark \ref{re:expressionforg}, which shows that in our case we have $g=0$. The other statements are immediate consequences of Corollary \ref{cor:GammaLimit} and Proposition \ref{controesempio}.
\end{proof}
\begin{remark}\label{re:dopoesempio}
    Since $\nu(U_0)=\delta_{x_0}(U_0)=0$, the last statement of Corollary \ref{cor:gammaconvergence esempio} implies that  \eqref{eq:limitmain} does not hold for $U=U_0$.
\end{remark}


The following result shows that  the sequence $F_k$ constructed in the proof of Proposition \ref{controesempio} has no subsequence such that \eqref{eq:limitmain} holds (for the subsequence) for every open subset of $\Omega$. In other words, even if we pass to a subsequence, there always  exists an exceptional open set $U\subset\Omega$ such that $F_k(\cdot,U)$ does not $\Gamma$-converge in the weak topology of $H^1_0(U)$; in particular, this shows that \eqref{eq:limitmain} cannot hold for $U$. More in general,  this shows that, in the localisation method for $\Gamma$-convergence, the usual compactness theorem (see \cite[Theorem 16.9]{DalMaso1993}) cannot be modified so as to obtain the $\Gamma$-convergence on every open set.

In what follows, given a strictly increasing function $\sigma\colon\mathbb{N}\rightarrow\mathbb{N}$, we denote by $\nu'_\sigma(U)$ and $\nu''_\sigma(u)$ the quantities \eqref{eq:defnuinf} and \eqref{eq:defnusup}, with $F_k(\cdot,U)$ substituted by $F_{\sigma(k)}(\cdot,U)$.

\begin{corollary}\label{controesempio2}
Let $n$, $A_k$, $A$, $U_0$, and $x_0$ be as in the statement Proposition~\ref{controesempio} and let $f_k$, $r_k$, $R_k$, and $x_k$ be as in Step 3 of the proof of Proposition~\ref{controesempio}. Then for every strictly increasing function $\sigma\colon \mathbb{N}\rightarrow\mathbb{N}$, there exists an open set $U\subset \Omega$ such that $\nu'_\sigma(U)\neq\nu''_\sigma(U)$. In particular, the sequence $F_{\sigma(k)}(\cdot,U)$ does not $\Gamma$-converge in the weak topology of $H^1_0(U)$.
    \end{corollary}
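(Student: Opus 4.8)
The plan is to construct the exceptional open set $U$ explicitly from the geometric data $x_k$, $r_k$, $R_k$ of Step 3, exploiting the fact that the concentration balls $B(x_k,R_k)$ are shrinking toward $x_0$ along a sequence, so that by a further thinning of the subsequence $\sigma$ we can force the "mass" $2$ (in the $H^{-1}$ sense) to be alternately inside and outside $U$. Concretely, I would first observe that since $x_k\to x_0$ and $R_k\to 0$, for each fixed $\e>0$ all but finitely many balls $B(x_k,R_k)$ lie in $B(x_0,\e)$, and moreover these balls are essentially disjoint from a fixed small ball once $k$ is large. The idea is to carve out of a fixed neighbourhood of $x_0$ a set $U$ that contains $B(x_{\sigma(k)},R_{\sigma(k)})$ for the even indices $k$ and is disjoint from (the closure of) $B(x_{\sigma(k)},R_{\sigma(k)})$ for the odd indices $k$; passing first to a subsequence of $\sigma$ for which the relevant balls are pairwise disjoint makes this possible.

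The second step is to show that for such a $U$ one has $\nu''_\sigma(U)<\nu'_\sigma(U)$. For the $\liminf$ side: restricting attention to the subsequence of even indices and arguing exactly as in Steps 2 and 4 of the proof of Proposition~\ref{controesempio} (using Lemma~\ref{lemma:H-1} together with $\|f_{\sigma(k)}\|^2_{H^{-1}(U)}\to 2$ along the even indices, since $B(x_{\sigma(k)},R_{\sigma(k)})\subset U$ carries essentially all the mass), one gets $\nu'_\sigma(U)\ge 1$ — indeed the $\liminf$ in \eqref{eq:defnuinf} is unchanged if we replace it by the $\liminf$ along any subsequence of indices, so the even-index analysis forces $-\nu'_\sigma(U)\ge -1$, hence $\nu'_\sigma(U)=1$ (the reverse inequality coming from $U\subset\Omega$ and monotonicity, as in Step 2). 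For the $\limsup$ side: I would show $\nu''_\sigma(U)=0$. Here the point is that for odd indices $\sigma(k)$ the support of $f_{\sigma(k)}$ is contained in $B(x_{\sigma(k)},r_{\sigma(k)})$, whose closure is disjoint from $U$; hence $\langle f_{\sigma(k)},v\rangle=0$ for all $v\in H^1_0(U)$ for odd $k$, so $F_{\sigma(k)}(v_k,U)=\tfrac12\int_U|\nabla v_k|^2\,dx\ge 0$ along odd indices, forcing $\limsup_k F_{\sigma(k)}(v_k,U)\ge 0$ for \emph{every} sequence $v_k\rightharpoonup 0$ (the $\limsup$ over all indices dominates the $\limsup$ over the odd ones). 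Thus $-\nu''_\sigma(U)\le 0$, and taking $v_k=0$ gives $\nu''_\sigma(U)=0$.

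Combining the two gives $\nu''_\sigma(U)=0<1=\nu'_\sigma(U)$, so $F_{\sigma(k)}(\cdot,U)$ does not $\Gamma$-converge by Corollary~\ref{cor:GammaLimit} (applied to the subsequence $\sigma$), which is the assertion; since $\sigma$ was arbitrary, no subsequence of $F_k$ $\Gamma$-converges on every open set, proving the final sentence of the introduction and that \cite[Theorem 16.9]{DalMaso1993} is sharp.

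The main obstacle I anticipate is purely geometric bookkeeping: given an arbitrary $\sigma$, I must extract a sub-subsequence along which the balls $B(x_j,R_j)$ are pairwise disjoint and shrinking fast enough that a single open set $U$ can simultaneously contain the "even" ones and avoid the closures of the "odd" ones while still being a legitimate open subset of $\Omega$ with $x_0\in\partial U$ — e.g. by taking $U$ to be a countable union of small disjoint balls around the chosen even-index $x_j$'s, accumulating only at $x_0$. One must check that along the even indices the escaping mass argument of Step 4 still delivers $\|f_{\sigma(k)}\|^2_{H^{-1}(U)}\to 2$ (this uses that $B(x_j,R_j)\subset\subset U$ and the monotonicity of the $H^{-1}$ norm in the domain, exactly as in \eqref{eq:new norm fk}), and along the odd indices that $\mathrm{supp}\,f_{\sigma(k)}\cap \overline U=\emptyset$. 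Both are elementary once the balls are chosen disjoint; the only real care needed is that the passage to a further subsequence of $\sigma$ is harmless, since $\nu'_\sigma$ and $\nu''_\sigma$ are defined through $\liminf$ and $\limsup$, and a $\liminf$ over a subsequence is $\ge$ the $\liminf$ over the full sequence while a $\limsup$ over a subsequence is $\le$ the $\limsup$ over the full sequence — which is precisely the direction of each inequality we need.
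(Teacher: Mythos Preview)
Your proposal is correct and follows essentially the same route as the paper: pass to a further subsequence with pairwise disjoint concentration balls, split it into even and odd halves, build $U$ so that the even-index balls $B(x_j,R_j)$ lie inside $U$ while the odd-index supports lie outside, and then use the subsequence inequalities $\nu'_\sigma\ge\nu'_{\sigma,\text{even}}$ and $\nu''_\sigma\le\nu''_{\sigma,\text{odd}}$ (exactly the ones you isolate in your last paragraph) together with the Step~4 recovery-sequence argument to conclude $\nu'_\sigma(U)=1>0=\nu''_\sigma(U)$. The paper's explicit choice is $U=\Omega\setminus C$ with $C=\{x_0\}\cup\bigcup_k\overline{B(x_{\tau_1(k)},R_{\tau_1(k)})}$ (removing the odd balls rather than uniting the even ones), which spares the bookkeeping you anticipate; also, watch two sign slips in your middle paragraph---the even-index analysis gives $-\nu'_\sigma(U)\le-1$ (not $\ge$) and the odd-index analysis gives $-\nu''_\sigma(U)\ge0$ (not $\le$), consistent with what you correctly state at the end.
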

    \begin{proof}
Without loss of generality, we may suppose that $\sigma(k)=k$ for every $k\in\mathbb{N}$. Since $R_k\to 0$, $x_k\to x_0$, $x_0\in\partial U_0$, and $B(x_k,R_k)\subset\subset U_0$, we can construct recursively a strictly increasing function $\tau\colon\mathbb{N}\rightarrow\mathbb{N}$ such that the closed balls of center $x_{\tau(k)}$ and radius $R_{\tau(k)}$ are pairwise disjoint. We set  $\tau_1(k):=\tau(2k+1)$ and $\tau_2(k):=\tau(2k)$.
Let $C$  be the compact set defined by
\[C:=\bigcup_{i=0}^{\infty}\overline{B(x_{\tau_1(k)},R_{\tau_1(k)})}\cup\{x_0\},\]
and let $U=\Omega\setminus C$. 

Since for any $k\in\mathbb{N}$ the support of $f_{\tau_1(k)}$ is contained in $C$, it follows that $\nu''_{\tau_1}(U)=0$. Recalling that $\nu''\leq\nu''_{\tau_1}$ by \eqref{eq:defnusup},  we obtain $0\leq\nu''(U)\leq\nu''_{\tau_1}(U)=0$, hence $\nu''(U)=0$. 

Conversely, since the closed balls of center $x_{\tau(k)}$ and radius $R_{\tau(k)}$ are pairwise disjoint, for every $k\in\mathbb{N}$ we have $B(x_{\tau_2(k)},R_{\tau_2(k)})\subset\subset U$. Repeating the arguments that lead to \eqref{eq:nu gec 1}, with $\sigma$ replaced by $\tau_2$ and $U_0$ replaced by $U$, we obtain $\nu'_{\tau_2}(U)\geq\nu_{\tau_2}''(U)\geq 1$,  where we have used the obvious inequality $\nu'_{\tau_2}\geq\nu''_{\tau_2}$.  Recalling that $\nu'\geq\nu'_{\tau_2}$ by \eqref{eq:defnuinf} and that $\nu'(\Omega)=1$ by Proposition \ref{controesempio}(b) , we obtain $1\leq \nu'_{\tau_2}(U)\leq\nu'(U)\leq\nu'(\Omega)=1$, where we used also the monotonicity of $\nu'$. This shows that $\nu'(U)=1.$ Since  $\nu''(U)=0$, we have that $\nu''(U)\neq \nu'(U)$. Therefore, $F_k(\cdot,U)$ does not converge in the weak topology of $H^1_0(U)$ by Corollary \ref{cor:GammaLimit}.
\end{proof}

\noindent \textsc{Acknowledgements.}
 This paper is based on work supported by the National Research Project (PRIN  2017BTM7SN) ``Variational Methods for Stationary and Evolution Problems with Singularities and 
 Interfaces", funded by the Italian Ministry of University and Research. 
The authors are members of the Gruppo Nazionale per 
l'Analisi Matematica, la Probabilit\`a e le loro Applicazioni (GNAMPA) of the 
Istituto Nazionale di Alta Matematica (INdAM). The authors wish to thank Gilles Francfort for some useful information about the proof of the convergence of the solutions of \eqref{rewritten} to the solution of~\eqref{eq:problemacambiato}.

{\frenchspacing
\begin{thebibliography}{99}

\bibitem{Bahvalov1974} Bahvalov N.S.: Averaged characteristics of bodies with periodic structure, Dokl. Akad. Nauk SSSR {\bf 218} (1974), 1046-1048.

\bibitem{Lions1978} Bensoussan A. Lions J.L., Papanicolaou,  Asymptotic analysis for periodic structures, Studies in Mathematics and its Applications, 5, { North-Holland Publishing Co., Amsterdam-New York}, 1978, p. xxiv+700. 

\bibitem{Francfort91} Brahim-Otsmane, S. Francfort, G.A.,  Murat, F.:, Homogenization in thermoelasticity, In: Random media and composites, SIAM, Philadelphia, PA, 1989, p. 13-45.

\bibitem{Braides2002} Braides A.:  $\Gamma$-convergence for beginners, Oxford Lecture Series in Mathematics and its Applications, vol. 22, Oxford University Press, Oxford, 2002, p. xii+218.

\bibitem{Cioranescu1999} Cioranescu D.,  Donato P.:  An introduction to homogenization,
Oxford Lecture Series in Mathematics and its Applications, vol. 17, The Clarendon Press, Oxford University Press, New York, 1999, p. x+262.

\bibitem{DalMaso1993} Dal Maso G.:  An introduction to {$\Gamma$}-convergence,
Progress in Nonlinear Differential Equations and their Applications, vol. 8 , Birkh\"{a}user Boston Inc., Boston, 1993, p. xiv+340.

\bibitem{DeGiorgiLetta1977}
De Giorgi E., Letta G.: Une notion g\'{e}n\'{e}rale de convergence faible pour des fonctions croissantes d'ensemble,
Ann. Scuola Norm. Sup. Pisa Cl. Sci.(4) {\bf 4} (1977), 61-99.

\bibitem{DeGiorgi1973} De Giorgi E., Spagnolo S.: Sulla convergenza degli integrali dell'energia per operatori ellittici del secondo ordine, Boll. Un. Mat. Ital. (4) {\bf 8} (1973), 391-411.

\bibitem{Francfort83}
Francfort, G.A.: Homogenization and Linear Thermoelasticity,
SIAM J. Math. Anal. {\bf 14}  (1983), 696-708.

\bibitem{Jikov1994} Jikov, V. V., Kozlov, S. M., Ole\u{\i}nik, O. A.: Homogenization of differential operators and integral functionals,  Springer-Verlag, Berlin, 1994, p. xii+570.

\bibitem{Marchenko2006}
Marchenko V.A, Khruslov E.Y.:  Homogenization of partial differential equations, Progress in Mathematical Physics, vol. 46, Birkh\"{a}user Boston Inc., Boston, 2006, p. xiv+398.

\bibitem{Murat1978}
Murat F.:  {$H$}-convergence, Rapport du séminaire d'analyse fonctionnelle et numérique de l'Université d'Alger, 1978.

\bibitem{Murat1997} Murat F., Tartar L.:  {$H$}-convergence, In: Topics in the mathematical modelling of composite materials, Progr. Nonlinear Differential Equations Appl., vol. 31,  Birkh\"{a}user Boston Inc., Boston, 1997, p. 21-43.

\bibitem{Oleuinik1992}Ole\u{\i}nik, O. A., Shamaev, A. S., Yosifian, G. A.: Mathematical problems in elasticity and homogenization,
Studies in Mathematics and its Applications, vol. 26, North-Holland Publishing Co., Amsterdam, 1992,p. xiv+398.

\bibitem{Pankov1997}
Pankov, A.: {$G$}-convergence and homogenization of nonlinear partial differential operators, 
Mathematics and its Applications, vol. 422, Kluwer Academic Publishers, Dordrecht, 1994, p. xiv+249.

\bibitem{Spagnolo1976}
Spagnolo S.:  Convergence in energy for elliptic operators, In:
Numerical solution of partial differential equations, {III}
({P}roc. {T}hird {S}ympos. ({SYNSPADE}), {U}niv. {M}aryland,
{C}ollege {P}ark, {M}d., 1975). Academic
Press, New York, 1976, p. 469-498.

\bibitem{Spagnolo1968}
Spagnolo S.:  Sulla convergenza di soluzioni di equazioni paraboliche ed ellittiche,
Ann. Scuola Norm. Sup. Pisa (3) {\bf 22} (1968), 571-599.

\bibitem{Tartar2009}
  Tartar L.: The General Theory of Homogenization: A Personalized Introduction,
 Springer Berlin, Heidelberg, 2009, xxii+470. 
\end {thebibliography}
}

\end{document}